\documentclass[12pt,twoside]{amsart}
\usepackage{graphicx}
\usepackage[T1]{fontenc}
\usepackage[utf8]{inputenc}
\usepackage[english]{babel}

\pdfpagewidth\paperwidth\pdfpageheight\paperheight
\usepackage[hidelinks]{hyperref}
\usepackage{textcomp}
\usepackage{amsthm}

\usepackage{amsmath,amssymb}
\usepackage{mathrsfs}
\usepackage{gensymb}
\usepackage{tipa}
\usepackage{wasysym}
\usepackage{tikz}
\usepackage{tikz-cd}
\usepackage{pgfplots}
\usepackage{mathtools}
\usepackage{nicematrix}

\pgfplotsset{compat=1.18}

\usepackage[a4paper, margin=2.5cm]{geometry}

\usepackage{fancyhdr}
\fancypagestyle{plain}{ \fancyhead{}
}

\newtheorem{thm}{Theorem}[section]
\newtheorem{prp}[thm]{Proposition}

\newtheorem{rmk}[thm]{Remark}
\newtheorem{dfn}[thm]{Definition}
\newtheorem{lem}[thm]{Lemma}
\newtheorem{cor}[thm]{Corollary}

\newcommand{\Wedge}{\textstyle\bigwedge}

\newcommand{\UL}{\underline{\Lambda}}
\newcommand{\dext}{\operatorname{d}}

\newcommand{\Ker}{\operatorname{Ker}}
\newcommand{\Ran}{\operatorname{Ran}}

\newcommand{\psum}{\sideset{}{'}\sum}

\newcommand{\Z}{\mathbb{Z}}

\newcommand{\C}{\mathbb{C}}
\newcommand{\R}{\mathbb{R}}

\newcommand{\T}{\mathbb{T}}

\usepackage{accents}
\newlength{\dhatheight}

\newcommand{\vertiii}[1]{{\left\vert\kern-0.25ex\left\vert\kern-0.25ex\left\vert #1
\right\vert\kern-0.25ex\right\vert\kern-0.25ex\right\vert}}

\title[Elliptic structures of minimal rank]{A decomposition theorem for elliptic structures of minimal rank on compact Lie groups and applications}

\date{\today}

\author[G. Drei]{Guido Drei}
\thanks{The first author was supported by the Marco Polo programme of the University of Bologna.}
\address{University of Bologna, Department of Mathematics, Piazza di Porta San Donato 5, 40126 Bologna, Italy}
\email{guido.drei2@unibo.it}

\author[M. R. Jahnke]{Max Reinhold Jahnke}
\thanks{The second author was funded by the DFG (grant JA 3453/2-1) during this work.}
\address{University of Cologne, Mathematics Institute, Weyertal 86-90, 50931 Cologne, Germany}
\email{max.jahnke@uni-koeln.de}

\subjclass[2020]{Primary 58J10; Secondary 22E30, 35N10, 32V05}

\keywords{Elliptic structures, involutive structures, compact Lie groups, root space decomposition, cohomology}

\begin{document}

\begin{abstract}
  We show that every left-invariant elliptic structure of minimal rank on an odd-dimensional compact Lie group admits
  an algebraic decomposition analogous to those obtained by Pittie in the complex case and by
  Charbonnel and Khalgui for left-invariant CR structures of hypersurface type. We then use this
  decomposition to prove that the cohomology associated with such an elliptic structure can be
  reduced to a computation on a suitable maximal torus.
\end{abstract}

\maketitle

\section{Introduction}

A central problem in the theory of several complex variables and in the study of partial
differential equations is the solvability of systems of complex vector fields. The theory of
involutive structures \cite{treves_hypo-analytic_1992, berhanu_introduction_2008} provides a unified
framework for studying this problem, since it translates the solvability problems into the language 
of differential complexes and their cohomology, making it easier to apply tools from algebraic topology 
and differential geometry. It includes many classical examples, such as the de Rham complex, the 
Dolbeault complex, and the tangential Cauchy-Riemann complex.
When the underlying manifold is a Lie group, the analytical problem of solvability of systems of vector fields
is strongly connected to the algebraic properties of the Lie algebra and sometimes can be determined by them.
This connection is most tractable for elliptic structures, which are the best-behaved involutive structures:
they are always locally solvable, and, when the ambient space is compact, their associated differential complexes have closed range, and the corresponding cohomology spaces are
finite-dimensional. They can also be viewed as an intermediate case between the de Rham
complex and the Dolbeault complex, and arise naturally in the study of transversely holomorphic
foliations \cite{jacobowitz_transversely_2000}.

Let $G$ be a connected and compact Lie group with Lie algebra $\mathfrak g$, and let $\mathfrak
g_\C$ be its complexification. An involutive structure over $G$ is a smooth subbundle $\mathcal V$
of $T_\C G$ satisfying the Frobenius condition: 
$[\mathcal V, \mathcal V] \subset \mathcal V$. A \textit{left-invariant involutive structure} on $G$ 
is an involutive bundle $\mathcal V \subset T_\C G$ that is invariant under the left multiplication 
in $G$, that is, if $L_x : G \to G$ is left-multiplication $L_x(g) = xg$ for $g \in G$, then 
$(L_x)_*X_g \in \mathcal V_{xg}$ for all $X_g \in \mathcal V_g$. If $\mathcal V$ is a left-invariant 
involutive vector bundle, there is a corresponding Lie algebra  $\mathfrak v \subset \mathfrak g_\C$ 
defined by the fiber of $\mathcal V$ at the identity of $G$. 
There is a one-to-one correspondence between left-invariant involutive bundles $\mathcal V$ and subalgebras of $\mathfrak g_\C$: In fact, given a subalgebra 
$\mathfrak v \subset \mathfrak g_\C$, we define by left-translation an involutive vector bundle 
$\mathcal V \subset T_\C G$. We always represent involutive structures by their corresponding Lie algebras.

Associated with each involutive structure is a differential complex
\[
\mathscr C^{\infty}(G,\UL^{0}) \xrightarrow{\dext'_{0}} \mathscr C^{\infty}(G,\UL^{1})\xrightarrow{\dext'_{1}}
\cdots \xrightarrow{\dext'_{q-1}} \mathscr C^{\infty}(G,\UL^{q})\xrightarrow{\dext'_{q}} 
 \cdots
\]
whose cohomology spaces are defined by
\[
H^{q}(G,\mathfrak v) = \frac{\Ker \dext'_{q} : \mathscr C^{\infty}(G,\UL^{q}) \to \mathscr C^{\infty}(G,\UL^{q+1})
}{\Ran \dext'_{q-1} : \mathscr C^{\infty}(G,\UL^{q-1}) \to \mathscr C^{\infty}(G,\UL^{q}) }
\]
with the usual convention for $q = 0$, namely
\[
H^{0}(G,\mathfrak v) = \Ker \dext'_{0} : \mathscr C^{\infty}(G,\UL^{0}) \to \mathscr C^{\infty}(G,\UL^{1}).
\]

Since the exterior derivative of a left-invariant differential form is left-invariant, 
the $\dext'$ operator can be restricted to 
left-invariant sections of $\UL^{q}$ defining a differential complex, the Chevalley-Eilenberg complex $(\Wedge^\bullet \mathfrak v^*, \dext')$ with cohomology denoted by $H^\bullet(\mathfrak v)$ and defining a subcomplex
\[
    \left( \Wedge^\bullet \mathfrak v^*, \dext' \right) \hookrightarrow \left( \mathscr C^{\infty}(G,\UL^{\bullet}), \dext' \right)
\]
inducing a homomorphism
\begin{equation}
\label{eq:the_homomorphism}
    H^\bullet(\mathfrak v) \to H^\bullet(G, \mathfrak v).
\end{equation}

If $G$ is compact, by using an averaging technique, it is easy to prove that \eqref{eq:the_homomorphism} is injective. Determining precise conditions for the map \eqref{eq:the_homomorphism} to be surjective in general is an open problem. We refer the reader to \cite{jahnke_closed_2026,
araujo_real_2026, jacobowitz_levi-flat_2023, jacobowitz_cohomology_2026} and the references
therein for related results.

When $G$ is compact, the homomorphism \eqref{eq:the_homomorphism} is an isomorphism in several cases, depending on how $\mathfrak v$ is included in $\mathfrak g_\C$:

\begin{itemize}
    \item \emph{de Rham} ($\mathfrak v = \mathfrak g_\C$): always \cite{chevalley_cohomology_1948};
    \item \emph{Dolbeault} ($\mathfrak v \oplus \overline{\mathfrak v} = \mathfrak g_\C$): always \cite{pittie_dolbeault-cohomology_1988};
    \item \emph{Elliptic} ($\mathfrak v + \overline{\mathfrak v} = \mathfrak g_\C$): when $\exp_{G_\C}(\mathfrak v)$ is closed in the universal complexification $G_\C$ of $G$ \cite{jahnke_closed_2026, jahnke_elliptic_2023};
    \item \emph{CR} ($\mathfrak v \cap \overline{\mathfrak v} = \{0\}$): when $\mathfrak v$ is of hypersurface type satisfying a division condition \cite{jacobowitz_cohomology_2026};
    \item \emph{Essentially real} ($\mathfrak v = \overline{\mathfrak v}$): when $\mathfrak v$ is globally hypoelliptic \cite{araujo_real_2026}.
\end{itemize}

In all the known cases, \eqref{eq:the_homomorphism} being an isomorphism seems to be related to the operator $\dext'$ having closed range in the relevant degrees. In general, determining conditions for $\dext'$ to have closed range is a very hard problem. We refer to \cite{cordaro_top-degree_2021} for a condition for the operator $\dext'$ to have closed range in the hypocomplex case, and we refer to \cite{jahnke_top-degree_2019} for consequences in the left-invariant case.

Notice that if $G$ has dimension $2k+1$ and $\mathfrak v$ is elliptic, then $\dim_\C \mathfrak v \geq k + 1$. We say that an elliptic structure has \emph{minimal rank} if $\dim_\C \mathfrak v = k+1.$
In this work, we obtain an algebraic decomposition of elliptic structures of minimal rank, and we deduce that 
\eqref{eq:the_homomorphism} is an isomorphism if $G$ has odd dimension and the elliptic structure has minimal rank.

Inspired by the work of Pittie \cite{pittie_dolbeault-cohomology_1988} and Charbonnel and Khalgui
\cite{charbonnel_classification_2004}, we prove that every elliptic structure of minimal rank on an odd-dimensional
compact Lie group admits an algebraic decomposition in terms of the root space decomposition of its
Lie algebra. This decomposition singles out a \textit{toral subalgebra}, that is, a subalgebra
tangent to a maximal torus. As an application, we use this toral subalgebra to reduce the
computation of the cohomology associated with the elliptic structure to a simpler computation on the
maximal torus. This approach considerably simplifies the problem, which is then reduced to the study
of elliptic abelian structures on a torus, so that it can be handled by standard Fourier series
arguments.

Let $\mathfrak t \subset \mathfrak g$ be a maximal abelian subalgebra, $\mathfrak t_\C \subset \mathfrak g_\C$ its complexification, and $\T = \exp(\mathfrak t)$ the corresponding maximal torus. Let $\Delta$ be the set of roots of $ \mathfrak t_\C$ in $ \mathfrak g_\C$ and $\Delta_+$ a system
of positive roots of $\Delta$. For each $\alpha \in \Delta$, we denote by $\mathfrak g_\alpha$ the
eigenspace associated to $\alpha$. We have that
\begin{equation}
  \label{eq:positive_roots_algebra}
  \mathfrak b_{\mathfrak t} \doteq \bigoplus_{\alpha \in \Delta_+} \mathfrak g_\alpha
\end{equation}
is a subalgebra of $ \mathfrak g_\C$. Since $\mathfrak{t}$ is abelian, any vector space
$\mathfrak{m} \subset \mathfrak{t}_\C$ is a subalgebra, and $\mathfrak v = \mathfrak m \oplus \mathfrak{b}_\mathfrak{t}$
is a subalgebra of $\mathfrak g_\C$. Furthermore, notice that $\mathfrak{b}_\mathfrak{t}$ is an ideal of $\mathfrak v$. If $\mathfrak{m}$ is elliptic, then $\mathfrak v$ is elliptic, and since every cohomology class of a left-invariant elliptic structure on a torus has a left-invariant representative (see Section \ref{sec:elliptic_torus}), a straightforward adaptation from \cite[Lemma 3]{jacobowitz_levi-flat_2023} gives us an injective map
\begin{equation}
\label{eq:torus_inclusion}
  H^{q}(\mathbb T, \mathfrak{m}) \hookrightarrow  H^{q}(G, \mathfrak{v})
\end{equation}
whose image consists of classes with left-invariant representatives.
If $\mathfrak{m}$ defines a complex structure on $\T$, which requires $\dim \T$ (hence $\dim G$) to be even, then $\mathfrak v$ defines a complex
structure on $G$. 

When $\mathfrak v$ defines a left-invariant complex structure on $G$, Pittie
\cite{pittie_dolbeault-cohomology_1988} proved that there exists a maximal abelian subalgebra
$\mathfrak t$ of $\mathfrak g$, a subalgebra $\mathfrak m \subset \mathfrak t_\C$, and an ideal
$\mathfrak b_{\mathfrak t} \subset \mathfrak v$ such that $\mathfrak v = \mathfrak m \oplus \mathfrak b_{\mathfrak
t}.$ An analogous result was obtained for CR structures of hypersurface type by Charbonnel and
Khalgui \cite{charbonnel_classification_2004}.

The algebraic decomposition of Pittie and the algebraic classification of Charbonnel-Khalgui motivated us to seek an algebraic decomposition for elliptic structures of minimal rank, which we describe in the next theorem. Our result provides an algebraic decomposition of left-invariant elliptic structures of minimal rank on compact groups of odd dimension, and it allows us to find another case in which the map \eqref{eq:the_homomorphism} is an isomorphism.

\begin{thm}
  \label{thm:decomposition}
  Let $G$ be a connected, compact Lie group of odd dimension $2k+1$, and let $\mathfrak v \subset \mathfrak g_\C$
  be an elliptic structure of minimal rank $k+1$. Then there exists a maximal abelian subalgebra
  $\mathfrak t \subset \mathfrak g$
  such that, setting $\mathfrak e \doteq \mathfrak v \cap \mathfrak t_\C$, we have a decomposition
  $$\mathfrak v = \mathfrak e \oplus \mathfrak b_{\mathfrak t},$$ where $\mathfrak e\subset\mathfrak
  t_{\mathbb C}$ is an elliptic subalgebra of minimal rank in $\mathfrak t_\C$ and
  $\mathfrak{b}_{\mathfrak t} = \bigoplus_{\alpha \in \Delta_+} \mathfrak g_\alpha$ for some positive roots $\Delta_+$ of $\mathfrak t_\C$ in $\mathfrak g_\C$.
\end{thm}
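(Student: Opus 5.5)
Throughout, $\mathfrak h\doteq\mathfrak v\cap\overline{\mathfrak v}$. Ellipticity gives
\[
\dim_\C\mathfrak h = 2(k+1)-(2k+1) = 1,
\]
and $\mathfrak h$ is invariant under conjugation, so $\mathfrak h=\C X$ for some $X\in\mathfrak g$ with $X\neq 0$.

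\textbf{Step 1: choice of the torus.} Since $\mathfrak h$ is a subalgebra of the real form $\mathfrak g$ and $\dim\mathfrak v=k+1$, the real subspace $\mathfrak v\cap\mathfrak g$ equals $\R X$. Consider the real subspace $\mathfrak v_\R\doteq\{\operatorname{Re} Z: Z\in\mathfrak v\}$. Because $\mathfrak v+\overline{\mathfrak v}=\mathfrak g_\C$, it is all of $\mathfrak g$. Moreover $\mathfrak v/\mathfrak h$ defines a complex structure $J$ on the even-dimensional space $\mathfrak g/\R X$.

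The first goal is to show that $\mathfrak v$ normalizes $\mathfrak h$, and in fact centralizes $X$ modulo $\mathfrak h$. Indeed, $\mathfrak h$ is an ideal of $\mathfrak v$: for $Z\in\mathfrak v$ we have $[Z,X]\in\mathfrak v$, and also $[\overline Z,X]\in\overline{\mathfrak v}$, so $[Z,X]\in\mathfrak v\cap\overline{\mathfrak v}$ once we know that $[\overline Z,X]\in\mathfrak v$ too. That last point needs more care.

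An alternative route: $\operatorname{ad}X$ preserves $\mathfrak v$ and $\overline{\mathfrak v}$, hence it preserves $\mathfrak v\cap\overline{\mathfrak v}=\C X$. On $\C X$ it acts by $[X,X]=0$, so $X$ acts on $\mathfrak g/\R X$ commuting with $J$. Then pick a maximal abelian $\mathfrak t\ni X$. Since $\operatorname{ad}X$ is semisimple and preserves $\mathfrak v$, $\mathfrak v$ decomposes into eigenspaces of $\operatorname{ad}X$.

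To get full $\mathfrak t$-invariance I would mimic Pittie. Let $\mathfrak z=\mathfrak z_{\mathfrak g}(X)$, a compact subalgebra of equal rank containing $\mathfrak t$. Then
\[
\mathfrak v = (\mathfrak v\cap\mathfrak z_\C)\oplus\bigoplus_{\lambda\neq0}\mathfrak v_\lambda .
\]
On $\mathfrak z/\R X$, the piece $\mathfrak v\cap\mathfrak z_\C$ induces a left-invariant complex structure. This is even-dimensional and compatible, so Pittie's theorem applied to the compact group with Lie algebra $\mathfrak z/\R X$ (or a reductive quotient) yields a maximal torus $\mathfrak t'\subset\mathfrak z/\R X$ adapted to it. I take $\mathfrak t$ to be the preimage of $\mathfrak t'$.

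\textbf{Step 2: $\mathfrak t$-stability and root decomposition.} Next I would show $[\mathfrak t\cap\cdots,\mathfrak v]\subset\mathfrak v$. The strategy is the Pittie/Charbonnel–Khalgui argument: take the normalizer $\mathfrak n$ of $\mathfrak v$ in $\mathfrak g$. It contains $X$, and $\mathfrak n_\C\supset\mathfrak v+\overline{\mathfrak v}$ in a suitable sense. Since $\mathfrak v\subset\mathfrak n_\C$ and $\mathfrak n$ is real, we get $\overline{\mathfrak v}\subset\mathfrak n_\C$, so $\mathfrak n_\C=\mathfrak g_\C$. Hence $\mathfrak v$ is an ideal in $\mathfrak g_\C$ only if $\mathfrak n=\mathfrak g$; this is false in general, so instead I work with the normalizer $\mathfrak n$ directly. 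I show it has full rank by a dimension/Killing-form argument, so that it contains a maximal torus $\mathfrak t$. Then $\mathfrak v$ is $\mathfrak t$-stable, which gives
\[
\mathfrak v = (\mathfrak v\cap\mathfrak t_\C)\oplus\bigoplus_{\alpha\in S}\mathfrak g_\alpha
\]
for a set of roots $S$.

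Ellipticity forces $S\cup\overline S=\Delta$, because $\overline{\mathfrak g_\alpha}=\mathfrak g_{-\alpha}$ and the root spaces are not in $\mathfrak t_\C$. Minimal rank plus a count then gives two things. First, $|S|\ge|\Delta|/2$. Second, $\dim(\mathfrak v\cap\mathfrak t_\C)\ge(\dim\mathfrak t+1)/2$, since $\mathfrak e+\overline{\mathfrak e}=\mathfrak t_\C$ by projecting the condition $\mathfrak v+\overline{\mathfrak v}=\mathfrak g_\C$ onto $\mathfrak t_\C$. Summing these bounds equals $k+1$, so both are equalities. Thus $S\cap(-S)=\emptyset$ and $\mathfrak e$ is elliptic of minimal rank.

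\textbf{Step 3: $S$ is a positive system.} $S$ is closed, since $\mathfrak v$ is a subalgebra and $[\mathfrak g_\alpha,\mathfrak g_\beta]=\mathfrak g_{\alpha+\beta}$. It also satisfies $S\sqcup(-S)=\Delta$. A closed subset with this property is a positive system by the standard Bourbaki lemma, so $S=\Delta_+$ and $\bigoplus_{S}\mathfrak g_\alpha=\mathfrak b_{\mathfrak t}$.

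\textbf{Main obstacle.} The hard step is producing a maximal torus that normalizes $\mathfrak v$, i.e.\ Step 1/2. I would try first to show that the real normalizer $\mathfrak n=\{Y\in\mathfrak g:[Y,\mathfrak v]\subset\mathfrak v\}$ contains $X$ and has maximal rank. The idea is to adapt Pittie's argument: pass to the compact group $N=$ closure of the normalizer, and use that $\mathfrak v/\mathfrak h$ induces an $N$-invariant complex structure on $G/\overline{\exp\R X}$-type quotients. I expect this to need an induction on $\dim G$, with the central/abelian case handled directly.
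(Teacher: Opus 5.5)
\textbf{There is a genuine gap.} Your Step 2 counting and Step 3 are correct \emph{once} you know that $\mathfrak v$ is stable under $\operatorname{ad}(\mathfrak t_\C)$ for some maximal abelian $\mathfrak t\subset\mathfrak g$. That is precisely the step the proposal never establishes.

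The normalizer route in Step 2 is not an argument. You assert that $\mathfrak n=\{Y\in\mathfrak g:[Y,\mathfrak v]\subset\mathfrak v\}$ has full rank ``by a dimension/Killing-form argument'', but no argument is given, and the claim is essentially as strong as the theorem itself. The intermediate assertions $\mathfrak v\subset\mathfrak n_\C$ and $\mathfrak n_\C\supset\mathfrak v+\overline{\mathfrak v}$ hold only when $\mathfrak v$ is an ideal of $\mathfrak g_\C$. Your first attempt, that $\C X$ is an ideal of $\mathfrak v$, is simply false. For example, on $\mathrm{SU}(2)$ with $\mathfrak v=\C T\oplus\mathfrak g_\alpha$ one has $\mathfrak v\cap\overline{\mathfrak v}=\C T$, yet $[T,\mathfrak g_\alpha]=\mathfrak g_\alpha$. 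So as written, the proof stops exactly at what you call the main obstacle.

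\textbf{The missing idea.} You do not need a torus that normalizes $\mathfrak v$ a priori. It suffices to find a maximal abelian $\mathfrak t$ such that $\mathfrak e\doteq\mathfrak v\cap\mathfrak t_\C$ satisfies $\mathfrak e+\overline{\mathfrak e}=\mathfrak t_\C$. Roots are imaginary on $\mathfrak t$, so a root, or a difference of two distinct roots, that vanished on $\mathfrak e$ would also vanish on $\overline{\mathfrak e}$, hence on all of $\mathfrak t_\C$. Therefore the $\operatorname{ad}(\mathfrak e)$-weight spaces of $\mathfrak g_\C$ are $\mathfrak t_\C$ and the individual root spaces. Since $\mathfrak e\subset\mathfrak v$, the subalgebra $\mathfrak v$ splits along these weight spaces. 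This is how the paper concludes.

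\textbf{Your Step 1 can supply such a $\mathfrak t$} if you finish it:
\begin{itemize}
\item Project $\mathfrak v+\overline{\mathfrak v}=\mathfrak g_\C$ onto the $0$-eigenspace $\mathfrak z_\C$ of $\operatorname{ad}X$. Using $\mathfrak v\cap\overline{\mathfrak v}=\C X$, this gives $\dim(\mathfrak v\cap\mathfrak z_\C)=(\dim\mathfrak z+1)/2$.
\item Write $\mathfrak z=\R X\oplus\mathfrak z'$, where $\mathfrak z'=X^\perp\cap\mathfrak z$ is an ideal. Then $\mathfrak c\doteq\mathfrak v\cap\mathfrak z'_\C$ is a left-invariant complex structure on the compact algebra $\mathfrak z'\cong\mathfrak z/\R X$.
\item Pittie's theorem gives $\mathfrak t'\subset\mathfrak z'$ with $\mathfrak c=\mathfrak m'\oplus\mathfrak b_{\mathfrak t'}$ and $\mathfrak m'\subset\mathfrak t'_\C$. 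A dimension count then gives $\mathfrak m'\oplus\overline{\mathfrak m'}=\mathfrak t'_\C$.
\item Set $\mathfrak t=\R X\oplus\mathfrak t'$. It is maximal abelian in $\mathfrak g$, since any abelian enlargement commutes with $X$ and so lies in $\mathfrak z$. Moreover $\mathfrak v\cap\mathfrak t_\C\supseteq\C X\oplus\mathfrak m'$, which is elliptic in $\mathfrak t_\C$.
\end{itemize}

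\textbf{Comparison with the paper.} Completed this way, your route is a genuine alternative. The paper applies the Charbonnel--Khalgui classification to the same algebra $\mathfrak s=\mathfrak c$, viewed as a hypersurface-type CR structure on $\ker(\operatorname{ad}_X)$. It therefore needs Proposition~\ref{prop:sisasubalgebra} to see that $\mathfrak s$ is a subalgebra, and Lemma~\ref{lem:cr1_is_never_dominated} to rule out type CR1. Passing to the ideal $\mathfrak z'$ and invoking Pittie makes both of these automatic.
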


\begin{rmk}
\label{rmk:b_t_nilpotent_ideal}
    The subalgebra $\mathfrak b_{\mathfrak t}$ is an ideal of $\mathfrak v$. In fact, \eqref{eq:positive_roots_algebra} implies it is a subalgebra, and we have $[\mathfrak e, \mathfrak b_{\mathfrak t}]
    \subset [\mathfrak t_\C, \mathfrak b_{\mathfrak t}] \subset \mathfrak b_{\mathfrak t}$, thus $[\mathfrak v, \mathfrak b_{\mathfrak t}] \subset \mathfrak b_{\mathfrak t}$. Furthermore, $\mathfrak b_{\mathfrak t}$ is nilpotent \cite[Proposition 7.2.1]{hilgert_structure_2012}, and thus solvable.
\end{rmk}

The essence of the proof consists in finding a suitable CR subalgebra of $\mathfrak v$, applying the
classification theorem of Charbonnel and Khalgui \cite{charbonnel_classification_2004} on this
subalgebra, and finally using the algebraic decomposition of the CR algebra to decompose $\mathfrak
v$.

Notice that $\mathfrak e$ defines an elliptic structure on the maximal torus $\T = \exp(\mathfrak
t)$.
We use this decomposition to compute the cohomology associated with $\mathfrak v$.

\begin{thm}
  \label{thm:reduction_to_torus}
  Let $G$ be a connected and compact Lie group and let $\mathfrak v \subset \mathfrak g_\C$ be a left-invariant
  elliptic structure that admits a decomposition $\mathfrak v = \mathfrak e \oplus \mathfrak b_{\mathfrak
  t}$ with $\mathfrak e \subset \mathfrak t_\C$ for some maximal abelian subalgebra $\mathfrak t$
  and $\mathfrak b_{\mathfrak t} = \bigoplus_{\alpha \in \Delta_+} \mathfrak g_\alpha$ for some
  choice of positive roots $\Delta_+$ relative to $\mathfrak t_\C.$

  Then for every $0 \leq q \leq \dim_\C \mathfrak v $ there is an isomorphism
  \[
    H^{q}(G, \mathfrak v) \cong H^{q}(\T, \mathfrak e).
  \]

  In particular, \eqref{eq:the_homomorphism} is an isomorphism and every cohomology class of $H^{q}(G, \mathfrak v)$ has a left-invariant
  representative.
\end{thm}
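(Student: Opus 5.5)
The plan is to decompose $\mathscr C^\infty(G,\UL^\bullet)$ with respect to the nilpotent ideal $\mathfrak b_{\mathfrak t}$ of Remark \ref{rmk:b_t_nilpotent_ideal}. We then show that the $\mathfrak b_{\mathfrak t}$-directions carry no cohomology beyond the Lie algebra cohomology of $\mathfrak b_{\mathfrak t}$ that survives the $\mathfrak t$-weight constraint. What remains is an elliptic complex on $\T$.

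\textbf{Step 1: left-invariant reduction.} First we reduce to left-invariant computations via a Hochschild--Serre type spectral sequence associated with the ideal $\mathfrak b_{\mathfrak t}\subset\mathfrak v$ and the quotient $\mathfrak v/\mathfrak b_{\mathfrak t}\cong\mathfrak e$. Concretely, filter $\mathscr C^\infty(G,\UL^\bullet)=\mathscr C^\infty(G)\otimes\Wedge^\bullet\mathfrak v^*$ by the number of $\mathfrak e^*$ factors. The $E_1$ term is then the cohomology of $\mathfrak b_{\mathfrak t}$ with coefficients in $\mathscr C^\infty(G)$, where $\mathfrak b_{\mathfrak t}$ acts by left-invariant vector fields, i.e.\ through the right regular representation.

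\textbf{Step 2: Peter--Weyl decomposition and Bott--Borel--Weil.} Decompose $\mathscr C^\infty(G)$ by Peter--Weyl into $\bigoplus_\pi V_\pi^*\otimes V_\pi$, with $\mathfrak g_\C$ acting on the $V_\pi$ factor. Since $\mathfrak b_{\mathfrak t}$ is the nilradical of a Borel subalgebra $\mathfrak t_\C\oplus\mathfrak b_{\mathfrak t}$, Kostant's theorem identifies $H^\bullet(\mathfrak b_{\mathfrak t},V_\pi)$ as a $\mathfrak t_\C$-module. It equals $\bigoplus_{w\in W}\C_{w\cdot\lambda}$ in degree $\ell(w)$, where $\lambda$ is the highest weight of $\pi$ and $w\cdot\lambda=w(\lambda+\rho)-\rho$.

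\textbf{Step 3: the abelian complex on $\T$.} The $E_1$ page now carries the action of $\mathfrak e\subset\mathfrak t_\C$ on these one-dimensional weight spaces, and the $\dext_1$ differential is the Koszul-type complex $\Wedge^\bullet\mathfrak e^*\otimes\C_\mu$. For a character $\mu$, this complex is acyclic unless $\mu|_{\mathfrak e}=0$. Ellipticity of $\mathfrak e$ in $\mathfrak t_\C$ means $\mathfrak e+\overline{\mathfrak e}=\mathfrak t_\C$, and together with the fact that $\mu$ is imaginary on $\mathfrak t$, this forces $\mu=0$. The weight $w\cdot\lambda$ is nonzero except when $w=1$ and $\lambda=0$, i.e.\ the trivial representation. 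Hence only constant functions contribute, with $\mathfrak b_{\mathfrak t}$-cohomology concentrated in degree $0$.

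The spectral sequence then degenerates at $E_2$ to $H^\bullet(\mathfrak e)=\Wedge^\bullet\mathfrak e^*$. This is exactly $H^\bullet(\T,\mathfrak e)$ by the torus result of Section \ref{sec:elliptic_torus}, where every class has an invariant representative and the torus complex reduces to the same vanishing-Fourier-mode computation. Since everything surviving is left-invariant, \eqref{eq:the_homomorphism} is surjective; it is injective by averaging. Compatibility with \eqref{eq:torus_inclusion} then identifies the isomorphism explicitly.

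\textbf{Main obstacle.} The hard part is analytic: making the spectral sequence and Peter--Weyl argument valid on smooth sections rather than on the algebraic direct sum of isotypic components. One needs uniform estimates, polynomial in $|\lambda|$, for the homotopy operators inverting $\dext_1$ on each $\pi$-component, so that the resulting series of primitives converge in $\mathscr C^\infty$. I would try to obtain these from Kostant's explicit Laplacian: on $\Wedge\mathfrak b_{\mathfrak t}^*\otimes V_\pi$ its eigenvalues are $\tfrac12(|\lambda+\rho|^2-|w\cdot\lambda+\rho|^2)$-type quantities. I would pair this with a lower bound on $|\mu(\mathfrak e)|$ coming from ellipticity, i.e.\ $|\mu|\lesssim|\mu|_{\mathfrak e}|$, which is a finite-dimensional norm equivalence. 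This bounds the inverses polynomially, and smoothness follows from the rapid decay of Fourier coefficients.
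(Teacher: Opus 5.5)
Your proposal is correct, and it takes a genuinely different route from the paper.

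\textbf{How the two proofs differ.} The paper argues geometrically:
\begin{enumerate}
\item Proposition~\ref{prop:structurebundle} trivializes $G\to\Omega=G/\T$ compatibly with the structures.
\item A Leray--Hirsch/Mayer--Vietoris argument gives the K\"unneth formula of Theorem~\ref{thm:KunnethIso}.
\item The factors $H^{\mu}(\Omega,\mathcal W)$, $\mu>0$, are killed by the vanishing theorem for the flag manifold (positive first Chern class).
\item Left-invariant representatives are obtained only indirectly, from the injection \eqref{eq:torus_inclusion} and a dimension count.
\end{enumerate}
You work infinitesimally instead. Peter--Weyl splits the complex into the finite-dimensional complexes $V_\pi^*\otimes C^\bullet(\mathfrak v,V_\pi)$. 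Hochschild--Serre for $\mathfrak v=\mathfrak e\ltimes\mathfrak b_{\mathfrak t}$, together with Kostant's theorem, then gives $H^\bullet(\mathfrak v,V_\pi)=0$ for nontrivial $\pi$ and $H^\bullet(\mathfrak v)\cong\Wedge^\bullet\mathfrak e^*$. The key point is the same one the paper uses in Section~\ref{sec:algebraic_decomposition}: a weight vanishing on $\mathfrak e$ vanishes on $\mathfrak e+\overline{\mathfrak e}=\mathfrak t_\C$, which follows from ellipticity of $\mathfrak v$ by projecting onto $\mathfrak t_\C$.

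The deep input is the same in both proofs. Kostant's theorem is the Lie-algebraic form of Bott--Borel--Weil, and the vanishing of $H^{>0}(\Omega,\mathcal O)$ is its trivial-weight case. What your route buys is that it dispenses with the compatible local sections of Lemma~\ref{lem:LocalSection} and the sheaf-theoretic K\"unneth step, which the paper only sketches by reference. It also produces the left-invariant representatives directly. One small point: if $\mathfrak g$ has a center $\mathfrak z$, then $w\cdot\lambda$ restricts to $\lambda|_{\mathfrak z}$ there, so $w\cdot\lambda=0$ still forces $\pi$ to be trivial.

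\textbf{The analytic step.} Your ``main obstacle'' is much milder than you expect, and you do not need polynomial bounds on homotopy operators. Apply Hochschild--Serre only to each finite-dimensional $V_\pi$; the $\mathfrak b_{\mathfrak t}$-complex is not elliptic, so it is better not to pass through its cohomology on all of $\mathscr C^\infty(G)$. Then use the fact that $\dext'$ has closed range on the compact group $G$, because $\mathfrak v$ is elliptic.

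The projection onto an isotypic component of the left regular representation is an average of left translations, so it commutes with $\dext'$. For closed $u$, every nontrivial component $u_\pi$ is therefore exact in its finite-dimensional subcomplex. The partial sums of $u-u_{\mathrm{triv}}$ are exact and converge in $\mathscr C^\infty$, so closed range makes $u-u_{\mathrm{triv}}$ exact. This is exactly the argument in the proof of Lemma~\ref{lem:dprime_closed_exact}. Hence $[u]=[u_{\mathrm{triv}}]$ with $u_{\mathrm{triv}}$ left-invariant.
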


Theorem \ref{thm:decomposition} allows us to apply Theorem \ref{thm:reduction_to_torus} to any elliptic structure of minimal rank. We thus obtain the following corollary.

\begin{cor}
  \label{cor:reductiontotorus}
  Let $G$ be a connected and compact Lie group of odd dimension $2k+1$ and let $\mathfrak v\subset\mathfrak
  g_{\mathbb C}$ be a left-invariant elliptic structure of minimal rank $k+1$. Then, for every
  $0\leq q\leq k+1$ there is an isomorphism $$H^{q}(G,\mathfrak v)\cong H^{q}(\mathbb T, \mathfrak
  e),$$ where $\mathfrak t$ is the maximal abelian subalgebra given by Theorem \ref{thm:decomposition}, $\mathbb T \subset G$ is the maximal torus with $\operatorname{Lie}(\T) = \mathfrak t$ and $\mathfrak e=\mathfrak v\cap\mathfrak
  t_{\mathbb C}$, the toral part of $\mathfrak v$, defines an elliptic structure of minimal rank on
  $\mathbb T$.

  In particular, \eqref{eq:the_homomorphism} is an isomorphism and every cohomology class of $H^{q}(G, \mathfrak v)$ has a left-invariant
  representative.
\end{cor}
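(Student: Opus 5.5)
The corollary follows by combining Theorem \ref{thm:decomposition} with Theorem \ref{thm:reduction_to_torus}. Only two things need checking: that the hypotheses of the second theorem hold, and that the range of degrees and the description of $\mathfrak e$ match.

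First, apply Theorem \ref{thm:decomposition} to the given $\mathfrak v$ of minimal rank $k+1$ on $G$ of dimension $2k+1$. This produces a maximal abelian subalgebra $\mathfrak t\subset\mathfrak g$ and a choice of positive roots $\Delta_+$ relative to $\mathfrak t_\C$ such that
\[
\mathfrak v=\mathfrak e\oplus\mathfrak b_{\mathfrak t},\qquad \mathfrak e=\mathfrak v\cap\mathfrak t_\C .
\]
Here $\mathfrak e$ is an elliptic subalgebra of minimal rank in $\mathfrak t_\C$. Since $\mathfrak t$ is abelian, $\mathfrak e$ is automatically involutive. Because $\mathfrak e+\overline{\mathfrak e}=\mathfrak t_\C$, it defines a left-invariant elliptic structure of minimal rank on the maximal torus $\T=\exp(\mathfrak t)$, whose Lie algebra is $\mathfrak t$.

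Second, $\mathfrak v$ is a left-invariant elliptic structure on the compact connected group $G$, and it has exactly the form required in Theorem \ref{thm:reduction_to_torus}. That theorem therefore gives
\[
H^q(G,\mathfrak v)\cong H^q(\T,\mathfrak e)\quad\text{for } 0\le q\le\dim_\C\mathfrak v=k+1 .
\]
This is precisely the stated range. The final assertions, that \eqref{eq:the_homomorphism} is an isomorphism and that every class has a left-invariant representative, are also part of the conclusion of Theorem \ref{thm:reduction_to_torus}.

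There is no real obstacle here. The only point to verify is that the minimal-rank property of $\mathfrak e$ in $\mathfrak t_\C$ is supplied by Theorem \ref{thm:decomposition}. It can also be checked directly by a dimension count. The roots come in pairs $\pm\alpha$, so $\dim\mathfrak b_{\mathfrak t}=(\dim\mathfrak g-\dim\mathfrak t)/2$. Hence
\[
\dim_\C\mathfrak e=k+1-\frac{2k+1-\dim\mathfrak t}{2}=\frac{\dim\mathfrak t+1}{2},
\]
which is the minimal rank for an elliptic structure on the odd-dimensional torus $\T$.
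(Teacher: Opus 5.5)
Your proposal is correct and takes the same route as the paper: the corollary follows by feeding the decomposition $\mathfrak v=\mathfrak e\oplus\mathfrak b_{\mathfrak t}$ from Theorem~\ref{thm:decomposition} into Theorem~\ref{thm:reduction_to_torus}, with $\dim_\C\mathfrak v=k+1$ giving the range of $q$. Your dimension count $\dim_\C\mathfrak e=(\dim\mathfrak t+1)/2$ is a correct consistency check. It agrees with the count $\dim\mathfrak e=r+1$, $|\Delta_+'|=k-r$ at the end of the paper's proof of Theorem~\ref{thm:decomposition}.
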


Theorem \ref{thm:reduction_to_torus} generalizes analogous results obtained by Pittie
\cite{pittie_dolbeault-cohomology_1988} in the complex case and by Jacobowitz and Jahnke
\cite{jacobowitz_levi-flat_2023} and Jacobowitz, Jahnke, Novelli, and Wehler \cite{jacobowitz_cohomology_2026} for certain Levi-flat left-invariant
CR structures, and answers a question raised in \cite{jahnke_top-degree_2019, jahnke_elliptic_2023}.

The paper is organized as follows.
In Section \ref{sec:preliminaries}, we introduce the notation
used in the paper and the construction of the differential complex associated with
involutive structures. 
We also recall the root space decomposition, state the Charbonnel-Khalgui classification theorem for CR structures, and prove that a CR structure of type
CR1 cannot be dominated by an elliptic structure of minimal rank when the real direction is orthogonal to it, which is a key step in the proof of Theorem \ref{thm:decomposition}.
In Section \ref{sec:algebraic_decomposition}, we
prove Theorem \ref{thm:decomposition} and conclude that every left-invariant elliptic structure of minimal
rank is solvable.
In Section \ref{sec:local_structure}, we recall some facts
about the semilocal structure of elliptic structures. We define the notion of Lie algebra of $\mathrm T$-type and
show that such an elliptic structure is locally a product 
of a complex structure on $\Omega = G / \T$ and an elliptic 
structure on the maximal torus $\T$.
In Section \ref{sec:toral_elliptic_cohomology}, we use the
obtained local description in a Mayer-Vietoris argument to
obtain a Künneth formula and prove Theorem \ref{thm:reduction_to_torus}.
Finally, in Section \ref{sec:elliptic_torus}, we use Fourier series to compute the cohomology of left-invariant elliptic structures on
a torus, which together with Corollary \ref{cor:reductiontotorus} gives us
$
    \dim_{\C} H^{q}(G,\mathfrak v) = \binom{r+1}{q}
$
with $r = (\operatorname{rank}\mathfrak g -1)/2$.

\section{Preliminaries}
\label{sec:preliminaries}

In this section, we review some concepts in the theory of involutive structures and on Lie groups.
The main references we use are \cite{berhanu_introduction_2008} for involutive structures and
\cite{hilgert_structure_2012} for Lie groups.

Let $G$ be a compact Lie group. An involutive structure $\mathcal V$ on $G$ is a smooth subbundle of
the complexified tangent bundle of $G$,
\[
\mathcal V \subset T_{\mathbb C}G=\mathbb C\otimes_{\mathbb R}TG,
\]
satisfying the involutivity (or integrability) condition $$[\mathcal V,\mathcal V]\subset\mathcal
V.$$
If $E$ is a finite-dimensional vector bundle over $G$ and $U \subset G$ is an open subset, we denote
by $\mathscr C^{\infty}(U, E)$ the space of smooth sections of $E$ over $U$. For each $g \in G$, we
define the annihilator bundle of $\mathcal V$ at $g$ by
\[
\operatorname{Ann} \mathcal V_g = \{ u \in \mathrm{T}^*_\C G|_g: u(X)= 0, \forall X \in \mathcal V|_g\}
\]
and we define the quotient $\UL^{q}_g = \Wedge^q (T_\C^* G|_g / \operatorname{Ann} \mathcal V_g)$
and the smooth vector bundle $\UL^{q} = \bigcup_{g\in G}\UL^{q}_g$ over $G$.

Since the structure $\mathcal V$ is involutive, there exists a unique differential operator $$\dext': \mathscr C^{\infty}(G,\UL^{q})\rightarrow \mathscr C^{\infty}(G,\UL^{q+1})$$ which makes the following diagram commutative $$\begin{matrix}\mathscr C^{\infty}(G, \Wedge^{q})&\xrightarrow{\dext_{q}} &\mathscr C^{\infty}(G, \Wedge^{q+1})\\ \downarrow{\pi_{q}} & &\downarrow{\pi_{q+1}} \\ \mathscr C^{\infty}(G,\UL^{q})&\xrightarrow{\dext'_{q}}& \mathscr C^{\infty}(G,\UL^{q+1})\end{matrix}$$ where $\pi_{q}:\mathscr C^{\infty}(G, \Wedge^{q})\rightarrow \mathscr C^{\infty}(G,\UL^{q})$ is the quotient map. To simplify the notation, we write $\dext'$ for the operator $\dext'_{q}$, and $\dext'_{\mathcal V}$ when it is necessary to emphasize the involutive structure.

The operator $\dext'$ maps smooth sections of $\UL^{q}$ to smooth sections of $\UL^{q+1}$ and
satisfies $\dext'\circ \dext'=0$. Hence $\dext'$ defines a complex
$$\mathscr C^{\infty}(G,\UL^{0})\xrightarrow{\dext'_{0}} \mathscr C^{\infty}(G,\UL^{1})\xrightarrow{\dext'_{1}}\dots\xrightarrow{\dext'_{q-1}}
\mathscr C^{\infty}(G,\UL^{q})\xrightarrow{\dext'_{q}} \mathscr C^{\infty}(G,\UL^{q+1})\xrightarrow{\dext'_{q+1}}\dots$$
which can be restricted to any open subset $U\subset G$. These complexes define cohomology spaces by
\[
    H^{q}(G,\mathcal V)=\frac{\Ker \dext'_{q} : \mathscr C^\infty(G, \UL^{q}) \to \mathscr C^\infty(G,\UL^{q+1})}{\Ran \dext'_{q-1} : \mathscr C^\infty(G, \UL^{q-1}) \to \mathscr C^\infty(G, \UL^{q})
}
\]
We also assume that $\mathcal V$ is a left-invariant vector bundle, that is $$(L_g)_{\ast}\mathcal
V_h\subset\mathcal V_{gh}, \quad \forall g, h \in G,$$ where $L_g : G \rightarrow G$ is the left
multiplication by $g$. Such a bundle uniquely defines a subalgebra $\mathfrak v \doteq \mathcal V_{e}$
of $\mathfrak g_\C$, where $e$ denotes the identity element of $G$. Here, we are interested in the
case when $\mathfrak v$ is an elliptic subalgebra of $\mathfrak g_\C$,
\[
    \mathfrak v+\overline{\mathfrak v} = \mathfrak g_\C.
\]

We also consider the case in which $\mathfrak v$ defines a
CR structure of hypersurface type, that is, when $\mathfrak v \cap \overline{\mathfrak v} = \{0\}$
and $2 \dim_\C \mathfrak v + 1 = \dim_\C \mathfrak g_\C$. When $\mathfrak v = \mathfrak g_{\mathbb
C}$, we get the de Rham complex. When $\mathfrak v\oplus\overline{\mathfrak v} =\mathfrak g_{\mathbb C}$,
we have a complex structure (that is also an elliptic structure on $G$), the operator $\dext'$ is
the $\overline\partial$ operator and the associated complex is the Dolbeault complex.
\   \\

We briefly review some basic concepts regarding root space decomposition. The rank of a Lie algebra
$\mathfrak g$ can be defined as the dimension of any Cartan subalgebra $\mathfrak t$ (which are all
of the same dimension, see \cite[Theorem 6.1.18]{hilgert_structure_2012}) and it is denoted by
$\mathrm{rank}\mathfrak g$. Given a Lie algebra $\mathfrak g$ and a Cartan subalgebra $\mathfrak t$,
it is possible to find a spectral decomposition of $\mathfrak g_{\C}=\mathfrak g\otimes\C$ as in
(\ref{spectraldec}). If $\mathfrak g$ is the Lie algebra of a compact Lie group, then it is equipped
with an ad-invariant inner product $\langle\cdot,\cdot\rangle$, that is, an inner product satisfying
$$\langle[X,Y],Z\rangle=-\langle Y,[X,Z]\rangle, \qquad \forall X, Y, Z \in \mathfrak g.$$ It can be
extended to an Hermitian inner product on $\mathfrak g_{\C}$ satisfying
$$\langle[X,Y ],Z\rangle = -\langle Y,[\overline X,Z]\rangle, \qquad \forall X,Y,Z \in \mathfrak g_{\C}.$$

Now, given a basis $\{T_1,\dots, T_{\mathrm{rank}\mathfrak g}\}$ for $\mathfrak t$, the
endomorphisms $\mathrm{ad}_{T_1},\dots, \mathrm{ad}_{T_{\mathrm{rank}\mathfrak g}}$ commute and are
skew-Hermitian with respect to the ad-invariant inner product $\langle\cdot,\cdot\rangle$ on
$\mathfrak g_{\C}$. Hence, they are simultaneously diagonalizable, and we may find common
(one-dimensional) eigenspaces $\mathfrak g_{\alpha}=\operatorname{span}_\C \{ L_{\alpha} \}$ and a decomposition
\begin{equation}\label{spectraldec}
  \mathfrak g_\C = \mathfrak t_\C \oplus \bigoplus_{\alpha \in \Delta_+} \mathfrak g_\alpha\oplus\overline{\mathfrak
  g}_{\alpha},
\end{equation}
where $\mathfrak t_{\C}=\mathfrak t\otimes\C$, $\overline{\mathfrak g}_{\alpha}=\mathfrak
g_{-\alpha}$ and $\Delta_+$ is a choice of a positive roots system.
Given a root system $\Delta$, one can choose a set of positive roots $\Delta_+$, which is a set with
the following properties: \begin{enumerate}
  \item for any $\alpha\in \Delta$, exactly one between $\alpha$ and $-\alpha$ is contained in
  $\Delta_+$, \item for any two distinct $\alpha,\beta\in\Delta_+$ such that $\alpha+\beta\in
  \Delta$, we have $\alpha+\beta\in\Delta_+$.
\end{enumerate} Notice that $\overline{\alpha}=-\alpha$, so we denote
$\Delta_-=\overline{\Delta}_+$. We have
$\Delta = \Delta_+ \cup\Delta_-$ and $\Delta_+ \cap \Delta_- = \emptyset$, which also implies that
$\dim \mathfrak g_\C - \dim \mathfrak t_\C$ is an even number, a fact that we will use later.
An element in $\Delta_+$ is called a simple root if it cannot be written as the sum of two elements
in $\Delta_+$. It is common to think $\alpha\in\Delta_+$ as an element in $\mathfrak t_{\C}^*$,
namely $\alpha=(\alpha(T_1),\dots,\alpha(T_{\mathrm{rank}\mathfrak g}))$, where $\alpha(T_j)\in
i\mathbb R$ for $j=1,\dots,\mathrm{rank}\mathfrak g$. According to the notation adopted above, we
denote by $\mathfrak g_0$ the Cartan subalgebra $\mathfrak t_{\mathbb C}$, that is, the
$\mathrm{rank} \mathfrak g$-dimensional eigenspace relative to 0.
\begin{rmk}
  \label{rem:orthogonality_rel}
  We have $$\mathrm{ad}_{T_j}(L_{\alpha})=[T_j,L_{\alpha}]=\alpha(T_j)L_{\alpha},$$ so that
  $[T_j,L_{\alpha}]\in\mathfrak g_{\alpha}$. Moreover, we have
  $$\mathrm{ad}_{T_j}([L_{\alpha},L_{\beta}])=[T_j,[L_{\alpha},L_{\beta}]]=-[L_{\alpha},[L_{\beta},T_j]]-[L_{\beta},[T_j,L_{\alpha}]]=$$
  $$=-[L_{\alpha},-\beta(T_j)L_{\beta}]-[L_{\beta},\alpha(T_j)L_{\alpha}]=(\alpha+\beta)(T_j)[L_{\alpha},L_{\beta}],$$
  so that $[L_{\alpha},L_{\beta}]\in \mathfrak g_{\alpha+\beta}$ if $\alpha + \beta$ is a root,
  $[L_\alpha,L_\beta] \in \mathfrak g_0 = \mathfrak t_\C$ if $\beta = -\alpha$,
  and $[L_{\alpha}, L_{\beta}] = 0$ otherwise. From ad-invariance, we also easily verify that
  $\mathfrak g_\alpha \perp \mathfrak t_\C$ for all $\alpha \in \Delta.$ In fact, for $T, T' \in \mathfrak t$
  with $\alpha(T) \neq 0$ (such $T$ exist because $\alpha$ is a root and so $\alpha \neq 0$), we
  have
  \[
  \alpha(T)\langle L_\alpha, T' \rangle = \langle [T, L_\alpha], T' \rangle = - \langle L_\alpha,
  [T, T'] \rangle = 0.
  \]
\end{rmk}
Here, we are interested in the odd-dimensional case, so we will suppose $\mathrm{dim} \mathfrak g
= 2k+1$, which also implies that $\mathrm{rank}$ is odd and so we write $\mathfrak g = 2r+1$ for some $0 \leq r \leq k$. Now, suppose that
$\mathrm{dim} \mathfrak v = \mathrm{rank} \mathcal V = k+1$, that is, $\mathfrak v$ has minimal
dimension among the elliptic subalgebras of $\mathfrak g_\C$.
We are interested in giving a complete algebraic decomposition of these types of left-invariant elliptic structures of minimal rank.

Regarding left-invariant CR structures $\mathfrak s$ of hypersurface type (i.e. of maximal rank $=k$), according to the classification by \cite{charbonnel_classification_2004}, the only possible cases are of type CR0 or CR1. In the first case $\mathfrak s$ is conjugated (under the adjoint action of $G$ on $\mathfrak g_{\mathbb C}$ by $\operatorname{Ad}(g)$) to a subalgebra of the form $$\mathfrak m\oplus\bigoplus_{\alpha\in\Delta_+}\mathfrak g_{\alpha},$$ where $\mathfrak m\subset \mathfrak t_{\mathbb C}$ is a maximal CR subalgebra such that $\mathfrak m \cap \overline{\mathfrak m} =\{0\}$, $\Delta_+$ is a choice of positive roots and $\mathfrak g_{\alpha}$ is the eigenspace relative to $\alpha\in\Delta_+$. In the CR1 case, $\mathfrak s$ is conjugated to a subalgebra of $\mathfrak g_\C$ of the form $$\mathfrak m\oplus\bigoplus_{\beta\in\Delta_+\setminus\{\alpha\}}\mathfrak g_{\beta}\oplus\mathrm {span}_{\mathbb C}\{t+x\},$$ where $\mathfrak m\subset \mathrm{ker} \alpha$ is a maximal CR subalgebra of $\mathfrak t_{\mathbb C}$ such that $\mathfrak m \cap \overline{\mathfrak m} = \{0\}$, $\alpha\in\Delta_+$ is a simple root, $x \in \mathfrak g_{\alpha} \setminus \{0\}$, and $t \in \mathfrak t \setminus \{0\}$.

In the following, we introduce a notion adapted from \cite{jacobowitz_transversely_2000}.

\begin{dfn}
  A CR structure of maximal rank $\mathfrak s$ is said to be \emph{dominated by an elliptic
  structure of minimal rank} $\mathfrak v$ if $\mathfrak s \subset \mathfrak v$ and $\mathfrak v =
  \mathfrak s \oplus \operatorname{span}_\C \{ X \}$ for some $X \in \mathfrak g \setminus \{0\}$.
\end{dfn}

It is obvious from the definition that a CR structure of type CR0 is always dominated by an elliptic
structure of minimal rank. In fact, if $\mathfrak s = \mathfrak m \oplus \bigoplus_{\alpha \in \Delta_+}
\mathfrak g_\alpha$ we can take a $X \in \mathfrak t \setminus (\mathfrak m \oplus \overline{\mathfrak
m})$ and it is easy to check that
\[
  \mathfrak v = \mathfrak m \oplus \operatorname{span}_\C \{X\}
  \oplus \bigoplus_{\alpha \in \Delta_+} \mathfrak g_\alpha
\]
is elliptic and dominates $\mathfrak s$.

It is also possible to show that some CR structures of type CR1 are dominated by an elliptic structure of minimal rank. In fact, if
\[
 \mathfrak s = \mathfrak m \oplus \bigoplus_{\beta \in \Delta_+, \beta \neq \alpha} \mathfrak
  g_{\beta} \oplus \operatorname{span}_\C \{x+t\}
\]
with $\alpha \in \Delta_+$ a simple root, $\mathfrak m \subset \ker \alpha$ a subspace with
$\mathfrak m \cap \overline{\mathfrak m} = \{0\}$, a non-zero $x \in \mathfrak g_\alpha$ and $t \in \mathfrak t \setminus \{0\}$ and $\alpha(t) \neq 0$, then $\operatorname{span}_\C \{t\} \oplus \mathfrak m$ is elliptic in $\mathfrak t_\C$ and
\[
  \mathfrak v = \operatorname{span}_\C \{t\} \oplus \mathfrak m\oplus\bigoplus_{\beta \in \Delta_+,\beta\neq\alpha}\mathfrak g_{\beta} \oplus \operatorname{span}_\C \{x+t\} = \operatorname{span}_\C \{t\} \oplus \mathfrak m \oplus \bigoplus_{\beta \in \Delta_+}
\mathfrak g_\beta
\]
dominates $\mathfrak s$.

We prove that a CR structure of type CR1 cannot be dominated by a left-invariant elliptic structure
of minimal rank if the real direction is orthogonal (with respect to $\langle \cdot,\cdot\rangle$)
to the CR structure. First, we prove a lemma that expresses $[L_{\alpha},\overline
L_{\alpha}]$ in terms of the Cartan subalgebra. Such a decomposition is a key step in the proof.
Specifically, given a decomposition as in \eqref{spectraldec} with $\mathfrak t =
\operatorname{span}\{T_1, \ldots, T_{2r+1}\}$ consisting of pairwise orthonormal vectors, that is,
$\langle T_j, T_k \rangle = \delta_{jk}$, we show that $[L_{\alpha},\overline L_{\alpha}]$ is a
linear combination of $\{T_1,\dots,T_{2r+1}\}$ whose coefficient along $T_j$ is $\alpha(T_j)$.

\begin{lem}
  \label{lem:decomposition_wrt_CSA}
  Let $\{T_1, \ldots, T_{2r+1}\}$ be an orthonormal basis for $\mathfrak t$ with respect to the
  ad-invariant inner product, and suppose that each $L_\alpha$, for $\alpha \in \Delta_+$, is
  normalized so that $\langle L_{\alpha}, L_{\alpha}\rangle = \overline{ \langle L_{\alpha}, L_{\alpha}\rangle } = \langle \overline L_{\alpha}, \overline  L_{\alpha}\rangle = 1$. Then, for every $\alpha \in \Delta_+$,
  \[
  [L_{\alpha}, \overline L_{\alpha}] = \sum_{j=1}^{2r+1} \alpha(T_j) T_j.
  \]
  In particular, the component of $[L_{\alpha},\overline L_{\alpha}]$ in the $T_j$-direction equals
  $\alpha(T_j)$.
\end{lem}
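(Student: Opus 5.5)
The plan is to show first that $[L_\alpha,\overline L_\alpha]$ lies in $\mathfrak t_\C$, and then to read off its coordinates in the orthonormal basis $\{T_1,\dots,T_{2r+1}\}$ using the ad-invariant Hermitian inner product.

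\emph{Step 1.} Since $\overline L_\alpha\in\overline{\mathfrak g}_\alpha=\mathfrak g_{-\alpha}$ and $\alpha+(-\alpha)=0$, Remark~\ref{rem:orthogonality_rel} gives $[L_\alpha,\overline L_\alpha]\in\mathfrak g_0=\mathfrak t_\C$. The $T_j$ form a real orthonormal basis of $\mathfrak t$, so they are also an orthonormal basis of $\mathfrak t_\C$ for the Hermitian extension. Hence we may write
\[
[L_\alpha,\overline L_\alpha]=\sum_{j=1}^{2r+1} c_j T_j, \qquad c_j=\langle [L_\alpha,\overline L_\alpha],T_j\rangle ,
\]
taking the product to be linear in the first slot and conjugate-linear in the second.

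\emph{Step 2.} Compute $c_j$ with the identity $\langle[X,Y],Z\rangle=-\langle Y,[\overline X,Z]\rangle$, choosing $X=L_\alpha$, $Y=\overline L_\alpha$, $Z=T_j$:
\[
c_j=-\langle \overline L_\alpha,[\overline L_\alpha,T_j]\rangle .
\]
We have $[T_j,\overline L_\alpha]=-\alpha(T_j)\overline L_\alpha$, which follows by conjugating $[T_j,L_\alpha]=\alpha(T_j)L_\alpha$ and using $\alpha(T_j)\in i\R$. Therefore $[\overline L_\alpha,T_j]=\alpha(T_j)\overline L_\alpha$. Substituting and using the normalization $\langle\overline L_\alpha,\overline L_\alpha\rangle=1$ gives
\[
c_j=-\overline{\alpha(T_j)}=\alpha(T_j),
\]
where the last equality again uses that $\alpha(T_j)$ is purely imaginary.

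\emph{Main obstacle.} The only delicate point is the sesquilinearity convention. It must be the one under which the stated complex ad-invariance formula is the correct extension of the real one: with the product conjugate-linear in the second slot, $\langle[X,Y],Z\rangle$ extends correctly and $\overline X$ appears in the second slot. I would check this by expanding $X=X_1+iX_2$ and $Z=Z_1+iZ_2$ with $X_i,Z_i\in\mathfrak g$ and comparing both sides. Getting this wrong would flip the sign, so I would verify it on an example: in $\mathfrak{su}(2)$, $[L_\alpha,\overline L_\alpha]$ is a positive real multiple of $-iH$-type elements, and the check is that this matches $\sum_j\alpha(T_j)T_j$. Everything else is a direct computation.
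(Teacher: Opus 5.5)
Your proposal is correct and follows the paper's proof essentially step for step. Remark~\ref{rem:orthogonality_rel} places $[L_\alpha,\overline L_\alpha]$ in $\mathfrak t_\C$, the coefficient is read off as $\langle[L_\alpha,\overline L_\alpha],T_j\rangle$, and then ad-invariance, together with $[T_j,\overline L_\alpha]=\overline{\alpha(T_j)}\,\overline L_\alpha=-\alpha(T_j)\overline L_\alpha$ and $\langle\overline L_\alpha,\overline L_\alpha\rangle=1$, yields $\alpha(T_j)$. Your convention check is also right: with $\langle X,Z\rangle=B(X,\overline Z)$ for the bilinear extension $B$, the stated ad-invariance formula holds. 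The closing $\mathfrak{su}(2)$ sanity check is not needed, and its sign depends on which root you declare positive.
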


\begin{proof}
  By Remark~\ref{rem:orthogonality_rel}, we have $[L_{\alpha},\overline L_{\alpha}] \in \mathfrak g_{0}
  = \mathfrak t_{\mathbb C}$ is a linear combination of the $T_j$'s. We write $[L_{\alpha},\overline
  L_{\alpha}]=\sum_{j=1}^{2r+1}e_j^{\alpha}T_j$. By ad-invariance, we have $$e_j^{\alpha} = \langle [L_{\alpha},\overline
  L_{\alpha}],T_j\rangle = -\langle \overline L_{\alpha},[\overline L_{\alpha}, T_j]\rangle
  = -\langle \overline L_{\alpha},-[T_j, \overline L_{\alpha}]\rangle
  =$$
  $$= \langle \overline L_{\alpha},\overline {\alpha(T_j)}\ \overline L_{\alpha}\rangle
  = \alpha(T_j) \underbrace{\langle \overline L_{\alpha}, \overline L_{\alpha} \rangle}_{=1}
  = \alpha(T_j),$$ where, in the last line, we used the fact that the inner product is linear in the
  first variable, and anti-linear in the second one.
\end{proof}
\begin{rmk}
  \label{rmk:on_properties_L_alpha}
  Writing $L_\alpha = X_\alpha - iY_\alpha$, with $X_\alpha,Y_\alpha \in \mathfrak g$, and using
  $\mathfrak g_\alpha \perp \mathfrak g_{-\alpha}$, we have that $\langle
  L_{\alpha},\overline L_{\alpha} \rangle =0$ and that $\langle
  L_{\alpha},L_{\alpha}\rangle =1$ is equivalent to
  $$ \begin{cases}
    \langle X_{\alpha}, X_{\alpha}\rangle = \langle Y_{\alpha}, Y_{\alpha}\rangle = \frac{1}{2},\\ \langle X_{\alpha}, Y_{\alpha}\rangle = \langle Y_{\alpha}, X_{\alpha} \rangle = 0.
  \end{cases}$$

  From now on, we assume that each $L_\alpha$ is normalized, that is, we assume
  \[
  \langle L_{\alpha}, L_{\alpha}\rangle   = 1, \qquad \forall \alpha \in \Delta_+. \]
\end{rmk}

In the following, we prove that CR structures of type CR1 cannot be dominated by a left-invariant
elliptic structure of minimal rank if the real direction is orthogonal to the CR1 structure.

\begin{lem}
  \label{lem:cr1_is_never_dominated}
  A CR Lie algebra of type CR1 on an odd-dimensional, connected and compact Lie group cannot be dominated by a
  left-invariant elliptic structure of minimal rank if the real direction is orthogonal to the CR
  structure.
\end{lem}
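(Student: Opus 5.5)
Argue by contradiction, after reducing to normal form. Since the adjoint action of $G$ preserves $\mathfrak g$, the inner product, complex conjugation and brackets, we may replace $\mathfrak s$ and $\mathfrak v$ by their conjugates under $\operatorname{Ad}(g)$. Then
\[
\mathfrak s = \mathfrak m \oplus \bigoplus_{\beta\in\Delta_+\setminus\{\alpha\}}\mathfrak g_\beta \oplus \operatorname{span}_\C\{t+x\},
\]
with $\alpha$ simple, $\mathfrak m\subset\ker\alpha$, $t\in\mathfrak t\setminus\{0\}$ and $x\in\mathfrak g_\alpha\setminus\{0\}$. Suppose $\mathfrak v=\mathfrak s\oplus\operatorname{span}_\C\{X\}$ with $X\in\mathfrak g\setminus\{0\}$, $X\perp\mathfrak s$ and $\mathfrak v$ a subalgebra. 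We aim to show that this is impossible.

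\textbf{Step 1: locate $X$.} The orthogonal complement of $\mathfrak s\oplus\overline{\mathfrak s}$ in $\mathfrak g_\C$ is one-dimensional and conjugation-invariant, so it contains $X$ up to scale. By Remark~\ref{rem:orthogonality_rel}, the spaces $\mathfrak t_\C$ and the $\mathfrak g_\beta$ are mutually orthogonal. Hence $X$ is orthogonal to $\mathfrak g_\beta$ and $\mathfrak g_{-\beta}$ for every $\beta\ne\pm\alpha$, and to $\mathfrak m\oplus\overline{\mathfrak m}$. Write
\[
X = \tau + c\,x + \bar c\,\bar x, \qquad \tau\in\mathfrak t.
\]
The condition $\langle X, t+x\rangle=0$ gives $\langle\tau,t\rangle + c\|x\|^2=0$. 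Orthogonality to $\overline{t+x}=t+\bar x$ gives the conjugate relation, so $c$ is real.

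\textbf{Step 2: brackets.} Since $\mathfrak v$ is a subalgebra, $[X,t+x]\in\mathfrak v$. I would compute this using $[\tau,x]=\alpha(\tau)x$, $[x,t]=-\alpha(t)x$, $[\bar x,t]=\alpha(t)\bar x$, and Lemma~\ref{lem:decomposition_wrt_CSA}, which gives $[x,\bar x]$ as the vector $h_\alpha=\sum_j\alpha(T_j)T_j$ up to normalization. The result is
\[
[X,t+x] = \bigl(\alpha(\tau)-c\,\alpha(t)\bigr)x + c\,\alpha(t)\,\bar x + \bar c\,(\text{multiple of }h_\alpha).
\]
The space $\mathfrak v$ has no $\mathfrak g_{-\alpha}$ component other than through $X$. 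Moreover, its $\mathfrak g_\alpha$ and $\mathfrak t$ parts are tied together through $t+x$ and $X$.

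\textbf{Step 3: compare components.} Decomposing $[X,t+x]=a(t+x)+bX+m+n$ with $m\in\mathfrak m$ and $n$ in the other root spaces, comparing the $\mathfrak g_{-\alpha}$ components forces $b\bar c=c\,\alpha(t)$. We then compare the $\mathfrak g_\alpha$ and $\mathfrak t_\C$ components. Here the key facts are:
\begin{itemize}
\item $h_\alpha\perp\ker\alpha$, so $h_\alpha$ is orthogonal to $\mathfrak m$;
\item $\alpha(T_j)\in i\R$.
\end{itemize}
From these we derive the two cases. If $c\ne0$, the relations force $\alpha(t)$ to be real, hence zero; but then $t\in\ker\alpha$ makes the $\mathfrak t$-component equations inconsistent with $\langle\tau,t\rangle=-c\|x\|^2\neq 0$. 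If $c=0$, then $X=\tau\in\mathfrak t$ with $\tau\perp t$ and $\tau\perp\mathfrak m$. Bracketing $\tau$ with $t+x$ gives $\alpha(\tau)x\in\mathfrak v$; since $x\notin\mathfrak v$ unless $t\in\mathfrak v$, this forces $\alpha(\tau)=0$. Then $\tau$, $t$ and $\mathfrak m\oplus\overline{\mathfrak m}$ all lie in $\ker\alpha$ or are orthogonal to it in a way we can count. A dimension count on $\mathfrak t$ should then give a contradiction: $\mathfrak m\oplus\overline{\mathfrak m}$ has dimension $2r$ inside $\ker\alpha$, which has dimension $2r$, so $\tau\in\ker\alpha\cap(\mathfrak m\oplus\overline{\mathfrak m})^\perp=\{0\}$.

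\textbf{Main obstacle.} I expect the hardest part to be the bookkeeping in Step 3 in the case $c\neq0$. There one must use the imaginary-valuedness of the roots, the normalization of Lemma~\ref{lem:decomposition_wrt_CSA}, and the orthogonality $h_\alpha\perp\ker\alpha$ simultaneously to extract a contradiction. I would first try expanding $[X,t+x]$ against the orthonormal basis $\{T_j\}$ completed by $h_\alpha$, and compare coefficients there.
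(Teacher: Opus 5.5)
Your setup is the paper's: the normal form, $X=\tau+c(x+\bar x)$ with $c\in\R$ and $\langle\tau,t\rangle=-c\|x\|^2$, the bracket $[X,t+x]$, and comparison of components. Your case $c=0$ also works once you close the loop you leave open. Since $t$ is real, $t\in\mathfrak v$ would put $t$ in $\mathfrak v\cap\overline{\mathfrak v}=\C\tau$, which is impossible because $t\neq0$ and $t\perp\tau$.

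The gap is the case $c\neq0$, which is the real content of the lemma and which you only assert. The claim that the relations ``force $\alpha(t)$ to be real'' is not derived, and it cannot come from the two facts you isolate. Let $\hat h\in\mathfrak t$ be a unit vector spanning $(\ker\alpha)^\perp\cap\mathfrak t$. Once $\tau$ and $t$ are expanded along $\hat h$, every root value that occurs is a real multiple of the single number $\alpha(\hat h)$, and this factor cancels. So $\alpha(\hat h)\in i\R$ carries no information, and $h_\alpha\perp\ker\alpha$ only serves to discard $m$ and the $\ker\alpha$-part of $t$. What remains is a real scalar identity. Its outcome depends on the exact constant in Lemma~\ref{lem:decomposition_wrt_CSA} and on feeding in the orthogonality relation, which you use only afterwards.

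Here is the missing computation. Since $\mathfrak m\oplus\overline{\mathfrak m}=\ker\alpha$, write $\tau=\lambda\hat h$ and $t=\kappa\hat h+T$ with $\lambda,\kappa\in\R$ and $T\in\ker\alpha$. Lemma~\ref{lem:decomposition_wrt_CSA} gives $[x,\bar x]=\|x\|^2\alpha(\hat h)\hat h$.
\begin{itemize}
\item For $c\neq0$, the $\mathfrak g_{-\alpha}$-equation gives $b=\alpha(t)=\kappa\,\alpha(\hat h)$.
\item The $\mathfrak g_\alpha$-equation then gives $a=\alpha(\tau)-2c\,\alpha(t)=(\lambda-2c\kappa)\alpha(\hat h)$.
\item Take the $\hat h$-component of the $\mathfrak t_\C$-equation $-c[x,\bar x]=at+b\tau+m$ and divide by $\alpha(\hat h)\neq0$. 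This gives $-c\|x\|^2=2\kappa\lambda-2c\kappa^2$.
\item Substituting $\kappa\lambda=\langle\tau,t\rangle=-c\|x\|^2$ yields $c(\|x\|^2+2\kappa^2)=0$, so $c=0$, a contradiction.
\end{itemize}
This is exactly the paper's identity $\kappa(1+2\kappa^2)=0$ under its normalization $\|x\|=\|\tau\|=1$, where $c=-\kappa$.

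The computation also shows why a qualitative argument cannot close this case. If one had $[x,\bar x]=\rho\|x\|^2\alpha(\hat h)\hat h$, the same steps would give $c\bigl((2-\rho)\|x\|^2+2\kappa^2\bigr)=0$. For $\rho>2$ this has solutions with $c\neq0$, taking $T=0$.
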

\begin{proof}
  Let $\mathfrak s$ be a CR structure of type CR1. By the Charbonnel-Khalgui classification,
  $\mathfrak s$ is conjugated to
  \[
  \Theta(\mathfrak m,\alpha,x,t) = \mathfrak m\oplus\bigoplus_{\beta\in\Delta_+,\beta\neq\alpha}\mathfrak
  g_{\beta} \oplus \operatorname{span}_\C \{x+t\}
  \]
  where $\alpha \in \Delta_+$ is a simple root, $\mathfrak m \subset \ker \alpha$ is a subspace with
  $\mathfrak m \cap \overline{\mathfrak m} = \{0\}$, a non-zero $x \in \mathfrak g_\alpha$ and non-zero $t \in \mathfrak t$.
  Since conjugation preserves orthogonality, the CR1 type (by definition) and the domination
  property, it is enough to show that
  $
  \Theta(\mathfrak m,\alpha,x,t)
  $
  cannot be dominated. Without loss of generality, we assume that $\langle x, x \rangle = 1$.

  By contradiction, we assume that there is a left-invariant elliptic structure $\mathfrak v$
  dominating $\mathfrak s$, with $\mathfrak v= \mathfrak s \oplus \operatorname{span}_\C \{ X \}$,
  where $\langle X,Z\rangle=0$ for all $Z\in\mathfrak s$ and $0\neq X\in\mathfrak
  v\cap\overline{\mathfrak v}$.
  Notice that if $X \perp \mathfrak s$, then $X \perp \overline{\mathfrak s}$. In fact, if for all
  $Z \in \mathfrak s$, we have $$0 = \langle X, Z \rangle = \langle X, \mathrm{Re} Z + i\mathrm{Im}
  Z \rangle = \langle X, \mathrm{Re} Z \rangle -i\langle X, \mathrm{Im} Z \rangle$$ and thus $
  \langle X, \mathrm{Re} Z \rangle = 0$ and $\langle X, \mathrm{Im} Z \rangle = 0$, which means that
  $X \perp \overline Z$. We get $X \perp \mathfrak m \oplus \overline{\mathfrak m} = \operatorname{ker} \alpha$, that is $X
  \perp \ker \alpha$.
  Thus, since $\ker\alpha \oplus \bigoplus_{\beta \in \Delta_+ \setminus \{\alpha\}} \mathfrak g_{\beta}\oplus\mathfrak
  g_{-\beta}\subset\mathfrak s\oplus\overline{\mathfrak s}$, then we may write
  \[
  X = X_0 + a \mathrm{Re}(x) + b \mathrm{Im}(x) \in (\ker\alpha)^{\perp}\oplus\mathfrak g_{\alpha}\oplus\mathfrak g_{-\alpha}, \qquad a, b\in\mathbb R,
  \]
  where $X_0\in(\ker\alpha)^{\perp}\subset\mathfrak t_{\mathbb C}$.
  We have $\langle X,x+t\rangle=0$.
  Since $t$ is real, we may write $t=\kappa X_0+T,$ for some $\kappa \in \mathbb R$, $X_0\in
  (\ker\alpha)^{\perp},$ and a real $T\in\ker\alpha$. Then $$0=\langle X,x+t\rangle=\langle
  X,\mathrm{Re} x+i\mathrm{Im} x + \kappa X_0+T\rangle$$ implies (we already know $\langle X,T\rangle=0$)
  that $$\begin{cases}
    \langle X,\mathrm{Re} x\rangle+\kappa \langle X,X_0\rangle=0\\
    \langle X,\mathrm{Im} x\rangle=0.
  \end{cases}$$

  Notice that, from the previous orthogonality relations, $X_0 = 0$ would imply $a = b = 0$ and $X =
  0$, a contradiction. So $X_0 \neq 0$, and, dividing $X$ by $\langle X_0, X_0 \rangle^{1/2}$ if
  necessary, we can assume $\langle X_0, X_0 \rangle = 1$.

  We have $\langle X_0, \operatorname{Re} x \rangle = \langle X_0, \operatorname{Im} x \rangle = 0$
  and from Remark \ref{rmk:on_properties_L_alpha} we get $\langle X, \operatorname{Re} x \rangle =
  \langle a \operatorname{Re} x, \operatorname{Re} x \rangle = a/2$. Thus, we deduce that $a = - 2\kappa 
  \langle X_0, X_0 \rangle = -2\kappa $ and that $b = 0.$

  Since $X_0 \neq 0$ and $X_0 \in (\operatorname{ker} \alpha)^\perp$, we write
  $\frac{1}{\alpha(X_0)}[X,x+t]\in\mathfrak v$ as a linear combination (with coefficients
  $c,d,e\in\mathbb C$) of $X$ and $x+t$ and some $M\in\mathfrak m$. In the computations, we use
  $[X_0,T]= 0$ because $X_0$ and $T$ belong to $\mathfrak t$ and $[x,T] =- \alpha(T)x = 0$ because
  $T \in \ker \alpha$ and similarly $[\overline x,T]=0$. By Lemma \ref{lem:decomposition_wrt_CSA} we
  have $[\overline x , x ] = -\alpha(X_0)X_0$.

  $$ \begin{aligned} \frac{1}{\alpha(X_0)}[X,x+t] & =\frac{1}{\alpha(X_0)}[X_0-\kappa (x+\overline x),x+\kappa X_0+T] \\
    & =\frac{1}{\alpha(X_0)}\biggl(\underbrace{[X_0,x]}_{\alpha(X_0)x}-\kappa \underbrace{[\overline x,x]}_{-\alpha(X_0)X_0}-\kappa^2\underbrace{[x,X_0]}_{-\alpha(X_0)x}-\kappa^2\underbrace{[\overline x,X_0]}_{\alpha(X_0)\overline x}\biggr) \\
    & = \kappa X_0+(1+\kappa ^2)x -\kappa^2\overline x \in \mathfrak t_\C \oplus \mathfrak g_\alpha \oplus \mathfrak g_{-\alpha}\\
    & =c(\underbrace{X_0-\kappa (x+\overline x)}_{X})+d(\underbrace{x+\kappa X_0+T}_{x+t})+eM \\
    & =(c+d\kappa )X_0+(-c\kappa +d)x-c\kappa \overline x+dT+eM.
  \end{aligned} $$

  Comparing term by term ($X_0,x,\overline x$ are linearly independent), and using that $\kappa $ is real,
  we get some identities for the coefficients and deduce that $$c=\kappa ,\quad d=1+2 \kappa ^2, \quad
  \kappa (1+2\kappa ^2)=0,$$ and we get $\kappa = c = 0, d=1$, so that $T$ must be zero (notice that if $T \neq 0$ then
  $T+eM=0$ has no solutions $(e,M)\in\mathbb C\times\mathfrak m$ since $T$ is real and $\mathfrak
  m\cap\mathfrak t=0$). But then, $t= \kappa X_0+T=0$, which is a contradiction, and the proof is complete.
\end{proof}

\begin{rmk}
  Notice that, in general, it is possible for a CR1 structure to be dominated by an elliptic one.
  Consider for instance the following CR1 structure on $\mathrm{SU}(2)$ $$\mathfrak s=
  \operatorname{span}_\C \{ Z = X - iY + \tau T \},\qquad \tau \in \mathbb R \setminus \{0\}.$$ Then
  $\mathfrak s$ is dominated by the elliptic structure $\mathfrak v = \operatorname{span}_\C \{ T,Z
  \} = \operatorname{span}_\C \{ T, X-iY \}$. Notice that $\langle T, Z \rangle \neq 0$, so this is
  not in contradiction with Lemma \ref{lem:cr1_is_never_dominated}.
\end{rmk}

\begin{dfn}
  An elliptic Lie algebra $\mathfrak v$ is said to be of \emph{T-type} if it is conjugate (up to the
  adjoint action of $G$ on $\mathfrak g_{\C}$) to
  \[
  \mathfrak e \oplus \bigoplus_{\alpha \in \Delta_+} \mathfrak g_\alpha
  \]
  in which $\mathfrak e \subset \mathfrak t_\C.$ The subalgebra $\mathfrak e$ is called the toral
  part of $\mathfrak v$.
\end{dfn}

Theorem \ref{thm:decomposition} shows that every left-invariant elliptic structure of minimal rank
on an odd-dimensional compact Lie group is of T-type.
\begin{rmk}
    It follows from Theorem \ref{thm:decomposition} that any left-invariant elliptic structure of minimal rank $\mathfrak v=\mathfrak e\oplus\bigoplus_{\alpha\in\Delta_+}\mathfrak g_{\alpha}$ dominates a CR0 structure. To see this, it suffices to notice that, setting $\mathfrak e=\mathrm{span}_{\C}\{X\}\oplus\mathfrak s$ with $0\neq X\in\mathfrak v\cap\overline{\mathfrak v}$, we have that $\mathfrak s\oplus\bigoplus_{\alpha\in\Delta_+}\mathfrak g_{\alpha}$ is still a Lie algebra, and it is forced to be of CR0 type.
\end{rmk}

\section{Decomposition of elliptic structures of minimal rank}
\label{sec:algebraic_decomposition}

In this section, we prove Theorem \ref{thm:decomposition}. Along the proof, we will state and prove
an auxiliary lemma (Lemma \ref{lem:dimensional_calculation}) and a proposition (Proposition
\ref{prop:sisasubalgebra}).

\begin{proof}[Proof of Theorem \ref{thm:decomposition}]
  Since $\mathfrak v\subset\mathfrak g_{\C}$ is an elliptic subalgebra of dimension $k+1$, the
  intersection $\mathfrak v \cap \overline{\mathfrak v}$ has dimension one. Let $X \in \mathfrak g
  \setminus \{0\}$ be any vector spanning $\mathfrak v \cap \overline{\mathfrak v}$. Since
  $\operatorname{span}_\R \{X\}$ is abelian, there exists a maximal abelian subalgebra $\mathfrak t
  \subset \mathfrak g$ containing $X$. Let $X, T_1,\dots, T_{2r}$ be linearly independent
  vector fields spanning $\mathfrak t$. By using an orthonormalization process, we may assume
  that $\{X, T_1,\dots, T_{2r}\}$ is an orthonormal basis for $\mathfrak t$.

  The complexification of $\mathfrak t$, which we denote by $\mathfrak t_\C$, is a Cartan subalgebra
  of $\mathfrak g_\C$ \cite[Lemma 12.2.1]{hilgert_structure_2012}. The root space decomposition
  gives us
  \[
  \mathfrak g_{\mathbb C}=\mathfrak t_{\mathbb C}\oplus\bigoplus_{\alpha\in\Delta}\mathfrak
  g_{\alpha},
  \]
  in which $\Delta$ is the set of roots with respect to $\mathfrak t_\C$.

  Now, we consider the adjoint map $\mathrm{ad}_X:\mathfrak g_{\mathbb C} \rightarrow \mathfrak{g}_{\mathbb
  C}$ and notice that $\mathrm{ker}(\mathrm{ad}_X)$ is a subalgebra of $\mathfrak g_{\mathbb
  C}$: if $A, B \in \mathrm{ker}(\mathrm{ad}_X)$ by Jacobi identity we have
  \[
  [X,[A,B]]=-[A,[B,X]]-[B,[X,A]]=0,
  \]
  so that $[A, B] \in \mathrm{ker} (\mathrm{ad}_X)$.

  Let $Z = T + \sum_{\alpha \in \Delta} L_\alpha$, with $T \in \mathfrak{t}_\C$ and $L_\alpha \in \mathfrak g_\alpha$,
  and notice that
  \[
  \mathrm{ad}_X(Z) = [X,T] + \sum_{\alpha \in \Delta} [X,L_\alpha] = \sum_{\alpha \in \Delta}
  \alpha(X)L_{\alpha}
  \]
  and so we have the following splitting
  \[
  \mathfrak g_{\C} = \ker({\mathrm{ad}_X}) \oplus \bigoplus_{\alpha \in \Delta,~ \alpha(X) \neq 0}
  \mathfrak g_{\alpha}.
  \]
  We define
  \[
  \mathfrak f \doteq \bigoplus_{\alpha \in \Delta,~\alpha(X) \neq 0}\mathfrak g_{\alpha},
  \]
  and
  \[
  l \doteq |\{\alpha\in\Delta_+:\alpha(X)\neq 0\}|\leq k-r
  \]
  for some choice of positive roots $\Delta_+$. Notice that $\dim \mathfrak f = 2l$.

  \begin{lem}
    \label{lem:dimensional_calculation}
    We have
    $$\dim(\mathfrak v\cap\ker(\mathrm{ad}_X))=k+1-l.$$
  \end{lem}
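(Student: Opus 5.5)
The plan is to exploit the fact that $X$ lies in $\mathfrak v$ and is real, so that $\mathrm{ad}_X$ preserves both $\mathfrak v$ and $\overline{\mathfrak v}$. We then split each of them along the eigenspace decomposition of $\mathrm{ad}_X$.

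First, since $X\in\mathfrak v\cap\overline{\mathfrak v}$ and both $\mathfrak v$ and $\overline{\mathfrak v}$ are subalgebras, we have $\mathrm{ad}_X(\mathfrak v)\subset\mathfrak v$ and $\mathrm{ad}_X(\overline{\mathfrak v})\subset\overline{\mathfrak v}$. By the computation preceding the lemma, $\mathrm{ad}_X$ is diagonalizable on $\mathfrak g_\C$. Its eigenspaces are $\ker(\mathrm{ad}_X)$ for the eigenvalue $0$, and, for each $\lambda\neq0$, the space $\bigoplus_{\alpha(X)=\lambda}\mathfrak g_\alpha$. The restriction of a diagonalizable operator to an invariant subspace is diagonalizable, and each of its eigenspaces is the intersection with the ambient eigenspace. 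Hence
\[
\mathfrak v=(\mathfrak v\cap\ker(\mathrm{ad}_X))\oplus(\mathfrak v\cap\mathfrak f),\qquad
\overline{\mathfrak v}=(\overline{\mathfrak v}\cap\ker(\mathrm{ad}_X))\oplus(\overline{\mathfrak v}\cap\mathfrak f).
\]
Equivalently, $\mathfrak v\cap\mathfrak f=\mathrm{ad}_X(\mathfrak v)$. It therefore suffices to show that $\dim(\mathfrak v\cap\mathfrak f)=l$, since then $\dim(\mathfrak v\cap\ker(\mathrm{ad}_X))=(k+1)-l$.

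Next, we project the ellipticity condition $\mathfrak v+\overline{\mathfrak v}=\mathfrak g_\C$ onto $\mathfrak f$ along $\ker(\mathrm{ad}_X)$. This projection maps $\mathfrak v$ onto $\mathfrak v\cap\mathfrak f$ and $\overline{\mathfrak v}$ onto $\overline{\mathfrak v}\cap\mathfrak f$, by the splittings above. We therefore obtain
\[
\mathfrak f=(\mathfrak v\cap\mathfrak f)+(\overline{\mathfrak v}\cap\mathfrak f).
\]
Moreover, the two summands intersect inside $\mathfrak v\cap\overline{\mathfrak v}\cap\mathfrak f=\operatorname{span}_\C\{X\}\cap\mathfrak f=\{0\}$, because $X\in\mathfrak t\subset\ker(\mathrm{ad}_X)$. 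So the sum is direct, and $\dim(\mathfrak v\cap\mathfrak f)+\dim(\overline{\mathfrak v}\cap\mathfrak f)=\dim\mathfrak f=2l$.

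Finally, $\mathfrak f$ is invariant under complex conjugation, since $\overline{\mathfrak g_\alpha}=\mathfrak g_{-\alpha}$ and $(-\alpha)(X)=-\alpha(X)\neq0$. Hence $\overline{\mathfrak v}\cap\mathfrak f=\overline{\mathfrak v\cap\mathfrak f}$, which has the same dimension as $\mathfrak v\cap\mathfrak f$. This gives $\dim(\mathfrak v\cap\mathfrak f)=l$, which concludes the proof.

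The only delicate point is the first step, namely justifying that $\mathfrak v$ splits compatibly with the eigenspace decomposition. This relies on the diagonalizability of $\mathrm{ad}_X$, which holds because $X\in\mathfrak t$ acts semisimply, as shown in the displayed formula for $\mathrm{ad}_X(Z)$.
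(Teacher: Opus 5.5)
Your proof is correct, but it takes a different route from the paper's. The paper never splits $\mathfrak v$ along the eigenspaces of $\mathrm{ad}_X$. Instead it traps $\dim(\mathfrak v\cap\ker(\mathrm{ad}_X))$ between two bounds.

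\begin{itemize}
\item For the lower bound it uses rank--nullity together with $\mathrm{Im}(\mathrm{ad}_X|_{\mathfrak v})\subset\mathfrak v\cap\mathfrak f$ and the Grassmann inequality $2\dim(\mathfrak v\cap\mathfrak f)\le\dim\mathfrak f+\dim(\mathfrak v\cap\overline{\mathfrak v}\cap\mathfrak f)=2l$.
\item For the upper bound it runs the same Grassmann count inside $\ker(\mathrm{ad}_X)$, where $\mathfrak v\cap\overline{\mathfrak v}\cap\ker(\mathrm{ad}_X)=\operatorname{span}_\C\{X\}$ contributes $1$.
\end{itemize}

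So the paper's count needs only $\mathfrak v\cap\overline{\mathfrak v}=\operatorname{span}_\C\{X\}$ and $\dim\mathfrak v=k+1$, not diagonalizability of the restriction. Equality in its chain then recovers $\mathrm{ad}_X(\mathfrak v)=\mathfrak v\cap\mathfrak f$ a posteriori.

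You instead use semisimplicity of $\mathrm{ad}_X|_{\mathfrak v}$ to get the splitting $\mathfrak v=(\mathfrak v\cap\ker(\mathrm{ad}_X))\oplus(\mathfrak v\cap\mathfrak f)$ up front. You then project $\mathfrak v+\overline{\mathfrak v}=\mathfrak g_\C$ onto $\mathfrak f$. This computes $\dim(\mathfrak v\cap\mathfrak f)=l$ exactly in one step and gives the structural identity $\mathfrak f=(\mathfrak v\cap\mathfrak f)\oplus\overline{\mathfrak v\cap\mathfrak f}$.

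A bonus of your route is that projecting onto $\ker(\mathrm{ad}_X)$ instead immediately shows $\mathfrak v\cap\ker(\mathrm{ad}_X)$ is elliptic in $\ker(\mathrm{ad}_X)$. The paper asserts this right after the lemma without separate justification.
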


  \begin{proof}
    Since $\mathfrak v$ is elliptic and $X \notin \mathfrak f$, we have
    \[ \begin{aligned}
      2\dim(\mathfrak v\cap \mathfrak f) & = \dim(\mathfrak v\cap \mathfrak f) +\dim(\overline{\mathfrak v}\cap \mathfrak f) \\
      & \leq \dim(\mathfrak f) +\underbrace{\dim(\mathfrak v\cap\overline{\mathfrak v}\cap\mathfrak f)}_{=0} \\
      & \leq 2l.
    \end{aligned} \]
    Since $\mathrm{Im}(\mathrm{ad}_X|_{\mathfrak v})\subset\mathfrak v\cap\mathfrak f$, we have
    \[ \begin{aligned}
      \dim(\mathfrak v\cap \mathrm{ker}(\mathrm{ad}_X)) &= \dim(\ker{\mathrm{ad}_X|_{\mathfrak v}}) \\
      &= \dim(\mathfrak v)-\dim(\mathrm{Im}(\mathrm{ad}_X|_{\mathfrak v})) \\
      & \geq \dim{\mathfrak v}-\dim(\mathfrak v\cap\mathfrak f) \\
      & \geq k+1-l.
    \end{aligned} \]
    On the other hand, since $\mathfrak v+\overline{\mathfrak v}=\mathfrak g_{\C}$ and
    $X\in\ker{\mathrm{ad}_X}$, we have
    \[ \begin{aligned}
      2\dim(\mathfrak v\cap \mathrm{ker}(\mathrm{ad}_X)) & \leq \dim(\mathrm{ker}(\mathrm{ad}_X))+\underbrace{\dim(\mathfrak v\cap\overline{\mathfrak v}\cap\mathrm{ker}(\mathrm{ad}_X))}_{=1} \\
      & =2(k-l) + 1 + 1 \\
      & = 2(k - l + 1).
    \end{aligned} \]
    By combining the two previous inequalities, we obtain the desired equality $\dim(\mathfrak v\cap
    \mathrm{ker}(\mathrm{ad}_X))=k-l+1$.
  \end{proof}

  By Lemma \ref{lem:dimensional_calculation}, we can write
  \[
  \mathfrak v = \mathrm{span}_{\C} \{X, W_1,\dots W_{k-l}, W_{k-l+1},\dots, W_k \},
  \]
  with all $W_j$ orthogonal to $X$, and $\mathfrak v \cap \mathrm{ker}(\mathrm{ad}_X) =
  \mathrm{span}_{\C}\{ X, W_1,\dots, W_{k-l}\}$ is a subalgebra of $\mathfrak v$ and an elliptic
  subalgebra of $\mathrm{ker}(\mathrm{ad}_X)$.

  \begin{prp}
    \label{prop:sisasubalgebra}
    The set $\mathfrak s \doteq \mathrm{span}_{\C}\{W_1,\dots,W_{k-l}\}$ is a subalgebra of
    $\mathfrak g_{\C}$.
  \end{prp}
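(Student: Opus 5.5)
The plan is to combine two facts: $\mathfrak v\cap\ker(\mathrm{ad}_X)$ is closed under brackets, and the ad-invariance of the Hermitian inner product forces brackets of elements orthogonal to $X$ to stay orthogonal to $X$.

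By construction, $\mathfrak v\cap\ker(\mathrm{ad}_X)=\operatorname{span}_\C\{X\}\oplus\mathfrak s$, and $\mathfrak s$ is exactly the orthogonal complement of $X$ inside this space, since the $W_j$ were chosen orthogonal to $X$. First, $\mathfrak v\cap\ker(\mathrm{ad}_X)$ is a subalgebra, being the intersection of two subalgebras: $\mathfrak v$, and $\ker(\mathrm{ad}_X)$, shown above via the Jacobi identity. Hence for $W,W'\in\mathfrak s$ we can write
\[
[W,W']=cX+S,\qquad c\in\C,\ S\in\mathfrak s.
\]
Pairing with $X$ and using $\langle S,X\rangle=0$ gives $\langle[W,W'],X\rangle=c\langle X,X\rangle$. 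Since $X\neq 0$, it suffices to prove $\langle[W,W'],X\rangle=0$.

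For this I would use the extended ad-invariance $\langle[A,B],C\rangle=-\langle B,[\overline A,C]\rangle$ from Section~\ref{sec:preliminaries} with $A=W$, $B=W'$, $C=X$:
\[
\langle [W,W'],X\rangle=-\langle W',[\overline W,X]\rangle .
\]
Since $X\in\mathfrak g$ is real, complex conjugation commutes with the bracket, so $[X,\overline W]=\overline{[X,W]}=0$, because $W\in\ker(\mathrm{ad}_X)$. Therefore $[\overline W,X]=0$, which gives $\langle[W,W'],X\rangle=0$. Hence $c=0$ and $[W,W']\in\mathfrak s$, so $\mathfrak s$ is a subalgebra of $\mathfrak g_\C$.

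The only delicate point is whether $\ker(\mathrm{ad}_X)$ is stable under conjugation, i.e.\ whether $\overline W\in\ker(\mathrm{ad}_X)$. This is exactly where the reality of $X$, i.e.\ $X\in\mathfrak v\cap\overline{\mathfrak v}\cap\mathfrak g$, is essential. I would also check that the conventions (linearity in the first slot, the conjugate $\overline A$ in the invariance formula) are used consistently, since a sign or conjugation slip there is the most likely source of error. Otherwise the argument is short and needs no case analysis.
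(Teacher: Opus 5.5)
Your argument is correct, and it takes a different route from the paper.

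\textbf{The paper's route.} It expands each $W_i$ in the basis $\{X,T_1,\dots,T_{2r}\}\cup\{L_\alpha,\overline L_\alpha\}$ and uses $[X,W_i]=0$ to kill the root components with $\alpha(X)\neq 0$. It then notes that only brackets of the form $[L_\alpha,\overline L_\alpha]$ can contribute a $\mathfrak t_\C$-component. Finally it invokes Lemma~\ref{lem:decomposition_wrt_CSA}, which gives that the $X$-coefficient of such a bracket is $\alpha(X)=0$ for the surviving roots.

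\textbf{Your route.} You apply the identity $\langle[A,B],C\rangle=-\langle B,[\overline A,C]\rangle$ once, directly to $W,W',X$. You combine it with the conjugation-stability of $\ker(\mathrm{ad}_X)$, which follows from $X\in\mathfrak g$. Your conventions are used correctly. In particular, $\langle S,X\rangle=0$ follows from linearity in the first slot and $W_j\perp X$, and $c\langle X,X\rangle=0$ forces $c=0$ by positivity.

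\textbf{Comparison.} This is really the same mechanism that proves Lemma~\ref{lem:decomposition_wrt_CSA}, applied to $W$ as a whole rather than root by root. So you need neither the root space expansion, nor the orthonormal basis of $\mathfrak t$, nor the normalization of the $L_\alpha$. Your version also proves more: $\mathrm{ad}_W(\mathfrak g_\C)\perp X$ for every $W\in\ker(\mathrm{ad}_X)$, so only one of the two factors needs to commute with $X$. The paper's computation keeps everything in the root-space bookkeeping used in the rest of the proof of Theorem~\ref{thm:decomposition}. For the proposition itself, however, your coordinate-free argument is shorter and cleaner.
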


  \begin{proof}

    We already know that $\mathfrak v\cap\ker(\mathrm{ad}_X)=\mathrm{span}_{\C}\{ X,W_1,\dots,
    W_{k-l}\}$ is a subalgebra of $\mathfrak g_{\C}$, being the intersection of two Lie
    subalgebras. Hence, we just have to verify that for each $i,j=1,\dots,k-l$, the component of
    $[W_i,W_j]$ in the $X$'s direction is zero. For $i=1,\dots, k-l$, we can write $W_i$ as a linear
    combination $$W_i=\sum_{u=1}^{2r}a_u^iT_u+\sum_{\alpha\in\Delta_+:\alpha(X)=
    0}b^i_{\alpha}L_{\alpha}+c^i_{\alpha}\overline L_{\alpha},$$ for some coefficients
    $a_u^i,b_{\alpha}^i,c_{\alpha}^i\in\mathbb C$. Let us show this fact: in principle,
    we can write $W_i$ as
    $$W_i=\sum_{u=1}^{2r}a_u^iT_u+\sum_{\alpha\in\Delta_+}b_{\alpha}^iL_{\alpha}+c_{\alpha}^i\overline
    L_{\alpha},$$ that is, as a linear combination of the basis elements (notice that by construction
    each $W_i$ is orthogonal to $X$). But since $W_i\in\ker(\mathrm{ad}_X)$ we have
    $$0=[X,W_i]=\sum_{u=1}^{2r}a_u^i\underbrace{[X,T_u]}_{=0}+\sum_{\alpha\in\Delta_+}b_{\alpha}^i\underbrace{[X,L_{\alpha}]}_{=\alpha(X)L_{\alpha}}+c_{\alpha}^i\underbrace{[X,\overline
    L_{\alpha}]}_{=-\alpha(X)\overline L_{\alpha}},$$ so that we must have
    $b_{\alpha}^i=c_{\alpha}^i=0$ whenever $\alpha(X)\neq0$, because the set
    $\{L_{\alpha},\overline{L}_{\alpha}, \alpha\in\Delta_+,\alpha(X)\neq 0\}$ is made of linearly
    independent vectors.

    Now we want to study the bracket $[W_i,W_j]$ for $i,j=1,\dots, k-l$. By Remark
    \ref{rem:orthogonality_rel}, for any $T\in\mathfrak t$ and $\alpha,\beta\in\Delta$ such that
    $\alpha\neq -\beta$, the Lie brackets $[T,L_{\beta}]$ and $[L_{\alpha},L_{\beta}]$ belong to
    $\bigoplus_{\gamma \in \Delta} \mathfrak g_\gamma,$
    so they have no components in the $X$'s direction. The only contributions in the toric
    directions can come from the brackets of type $[L_{\alpha},\overline L_{\alpha}]$. By Lemma
    \ref{lem:decomposition_wrt_CSA}, the coefficient of $[L_{\alpha},\overline L_{\alpha}]$ in the
    $X$'s direction is proportional to $\alpha(X)$, for any $\alpha\in\Delta_+$. Since the $W_j$'s
    are written as above, for each $i,j \in \{1, \dots, k-l\}$ the only brackets of type
    $[L_{\alpha},\overline L_{\alpha}]$ appearing in $[W_i,W_j]$ are those for which $\alpha(X)=0$.
    This concludes the proof.
  \end{proof}

  \begin{cor}
    \label{cor:s_is_cr0_asubalgebra}
    The algebra $\mathfrak s$ is a maximal-rank CR subalgebra of $\mathrm{ker}(\mathrm{ad}_X)$ and
    it is dominated by the elliptic subalgebra $\mathfrak v\cap \mathrm{ker}(\mathrm{ad}_X)$.
  \end{cor}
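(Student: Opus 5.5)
Three things have to be checked: that $\mathfrak s$ is CR inside $\mathfrak k \doteq \ker(\mathrm{ad}_X)$, that it has maximal (hypersurface-type) rank there, and that $\mathfrak v\cap\mathfrak k$ dominates it. All three follow from Lemma \ref{lem:dimensional_calculation}, Proposition \ref{prop:sisasubalgebra}, and the orthogonality $W_j\perp X$.

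\emph{Structure of $\mathfrak k$.} By the splitting $\mathfrak g_\C=\ker(\mathrm{ad}_X)\oplus\mathfrak f$, we have $\mathfrak k=\mathfrak t_\C\oplus\bigoplus_{\alpha(X)=0}\mathfrak g_\alpha$. This is the complexification of the real subalgebra $\mathfrak k_0=\ker(\mathrm{ad}_X)\cap\mathfrak g$, the Lie algebra of the centralizer of $X$, a compact group. Its dimension is $\dim\mathfrak k=2r+1+2(k-r-l)=2(k-l)+1$, which is odd. Proposition \ref{prop:sisasubalgebra} already gives that $\mathfrak s\subset\mathfrak k$ is a subalgebra, and $\dim_\C\mathfrak s=k-l$ because the $W_j$ are part of a basis of $\mathfrak v$.

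\emph{CR condition.} Suppose $Z\in\mathfrak s\cap\overline{\mathfrak s}$. Then $Z\in\mathfrak v\cap\overline{\mathfrak v}=\operatorname{span}_\C\{X\}$, so $Z=cX$. But every element of $\mathfrak s$ is orthogonal to $X$, so $c\langle X,X\rangle=0$, giving $c=0$. Hence $\mathfrak s\cap\overline{\mathfrak s}=\{0\}$. Since $2\dim\mathfrak s+1=2(k-l)+1=\dim\mathfrak k$, $\mathfrak s$ is a CR structure of hypersurface type, i.e.\ of maximal rank, on the compact group with Lie algebra $\mathfrak k_0$.

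\emph{Domination.} Lemma \ref{lem:dimensional_calculation} shows that $\mathfrak v\cap\mathfrak k=\operatorname{span}_\C\{X\}\oplus\mathfrak s$ has dimension $k-l+1$, and it is a subalgebra. It is elliptic in $\mathfrak k$: its conjugate contains $X$ and $\overline{\mathfrak s}$, so the sum has dimension $2(k-l)+1=\dim\mathfrak k$. Its dimension is therefore the minimal rank for $\mathfrak k$. Since $X\in\mathfrak k_0\setminus\{0\}$ and $\mathfrak v\cap\mathfrak k=\mathfrak s\oplus\operatorname{span}_\C\{X\}$, the definition of domination is satisfied.

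The only slightly delicate point is making sure that $\mathfrak k$ is treated as the complexified Lie algebra of a compact connected group. We use the identity component of the centralizer of $X$, so that the notions of CR and elliptic structures of minimal rank, together with the later Charbonnel--Khalgui classification, apply to it. We should also note, for the subsequent use of Lemma \ref{lem:cr1_is_never_dominated}, that the real direction $X$ is orthogonal to $\mathfrak s$ by construction.
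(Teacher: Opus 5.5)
Your proof is correct and follows essentially the same route as the paper: $\mathfrak s\cap\overline{\mathfrak s}\subset\mathfrak v\cap\overline{\mathfrak v}=\operatorname{span}_\C\{X\}$, the dimension count $2(k-l)+1=\dim\ker(\mathrm{ad}_X)$, and $\mathfrak v\cap\ker(\mathrm{ad}_X)=\mathfrak s\oplus\operatorname{span}_\C\{X\}$ from Lemma \ref{lem:dimensional_calculation}. The only differences are minor: you rule out the $X$-component using $W_j\perp X$ where the paper uses linear independence of $\{X,W_1,\dots,W_{k-l}\}$, and you spell out the ellipticity count in $\ker(\mathrm{ad}_X)$, which the paper simply asserts.
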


  \begin{proof}
    Since $\mathfrak s \subset \mathfrak v$, we have $\mathfrak s \cap \overline {\mathfrak s}
    \subset \mathfrak v \cap \overline {\mathfrak v} = \operatorname{span}_\C \{X\}$ but since $\{X,
    W_1, \ldots, W_{k-l}\}$ is linearly independent, we conclude $\mathfrak s \cap \overline {\mathfrak
    s} = \{0\}$ and $\mathfrak s$ is a CR subalgebra. It is of hypersurface type because $\dim_\C
    \operatorname{ker}(\operatorname{ad}_X) = 2k + 1 -2l$ and $\dim_\C \mathfrak s = k-l$ and so
    $2\dim_\C \mathfrak s + 1 = \dim_\C \operatorname{ker}(\operatorname{ad}_X)$. It is clearly
    dominated by $\mathfrak v \cap \operatorname{ker} (\operatorname{ad}_X)$ since 
    $\mathfrak v \cap \operatorname{ker}
    (\operatorname{ad}_X) = \mathfrak s \oplus \operatorname{span}_\C \{X\}$ and $X \neq 0$. And
    $\mathfrak s \oplus \operatorname{span}_\C \{X\}$ is elliptic of minimal rank in
    $\operatorname{ker}(\operatorname{ad}_X)$.
  \end{proof}

  The strategy now is to apply the Charbonnel-Khalgui classification theorem to $\mathfrak s$ seen
  as a CR structure defined on a closed subgroup of $G$.

  First we consider $K \doteq \exp(\mathrm{ker}(\mathrm{ad}_X|_{\mathfrak g}))$. Since $K$ contains
  the maximal torus $\mathbb T = \exp(\mathfrak t)$, by 
  \cite[Corollary 14.5.6]{hilgert_structure_2012}, we have that $K$ is closed in $G$ and thus compact. 
  Second, by the
  Charbonnel-Khalgui classification theorem, we have $\mathfrak s$ is either of type CR0 or of type
  CR1. That is, $\mathfrak s \subset\mathrm{ker}(\mathrm{ad}_X)$ is a maximal-rank CR Lie
  subalgebra dominated by the elliptic subalgebra $\mathfrak v\cap\mathrm{ker}(\mathrm{ad}_X)$.
  Also, notice that, by construction, $X$ is orthogonal to $\mathfrak s$. Then, by Lemma
  \ref{lem:cr1_is_never_dominated}, $\mathfrak s$ must be of CR0 type. Then there is $g \in K$ such that
  \[
    \mathfrak s = \mathrm{Ad}_g(\mathfrak m \oplus \bigoplus_{\lambda\in D_+} \mathfrak g_{\lambda})
  \]
  where $\mathfrak m\subset\mathfrak
  t_{\mathbb C}$ is a CR subalgebra of maximal rank. Since the adjoint action of $G$ on $\mathfrak
  g_{\C}$ permutes the Cartan subalgebras, and $\operatorname{Ad}_g X = X$, since $X$ is central in $\operatorname{ker} \operatorname{ad}_X$, we have that 
  $\mathfrak t_\C' \doteq \operatorname{Ad}_g (\mathfrak t_\C)$ is a Cartan subalgebra of 
  $\mathfrak g_\C$ containing $X$, and $\mathfrak v \cap \mathfrak t_\C' = \operatorname{span}_\C \{ X \} \oplus \operatorname{Ad}_g (\mathfrak m)$ is an elliptic subalgebra of $\mathfrak t_\C'.$
  Let $\mathfrak e \doteq \mathfrak v \cap \mathfrak t_\C' = \operatorname{span}_\C \{X\} \oplus \operatorname{Ad}_g(\mathfrak m)$ 
  and let $\Delta'$ be the set of roots of $\mathfrak g_\C$ with respect to $\mathfrak t_\C'$. 
  For each $\beta \in \Delta'$, we denote by $\mathfrak g_\beta$ the corresponding root space 
  and let $S = \{\beta \in \Delta': \mathfrak g_\beta \subset \mathfrak v\}$. In the following, we show that
  \begin{equation}
  \label{eq:root_space_decomp_v_ell}
      \mathfrak v = \mathfrak e \oplus \bigoplus_{\beta \in S} \mathfrak g_\beta
  \end{equation}
  and that $S$ is a system of positive roots of $\Delta'.$

  Notice that no root of $\mathfrak t_\C'$ vanishes on $\mathfrak e$ and that distinct roots of $\mathfrak t_\C'$ have distinct restrictions to $\mathfrak e$. This follows directly from the fact that the restriction of a root to $\mathfrak t'$ is purely imaginary. If $\gamma$ is a root or the difference of two distinct roots and $\gamma$ vanishes on $\mathfrak e$, then it would also vanish on $\overline{\mathfrak e}$ and thus on $\mathfrak e + \overline{\mathfrak e} = \mathfrak t_\C'$, so $\gamma = 0$, which is impossible in any case.

  As a consequence, we have that the weight spaces of $\mathfrak g_\C$ under the action of $\operatorname{ad}(\mathfrak e)$ are $\mathfrak t_\C'$ for 0 and the root spaces $\mathfrak g_\beta$. Since $\mathfrak v$ is $\operatorname{ad}(\mathfrak e)$-invariant and $\mathfrak e$ is abelian (and thus nilpotent), we can apply \cite[Lemma 6.1.3]{hilgert_structure_2012} to decompose $\mathfrak v$ into the sum of its $\operatorname{ad}(\mathfrak e)$-weight spaces, and each of them is contained in the corresponding weight space of $\mathfrak g_\C$. Since the root spaces are one-dimensional, we have that $\mathfrak v \cap \mathfrak g_\beta$ is either $\{0\}$ or $\mathfrak g_\beta$.
  We proved \eqref{eq:root_space_decomp_v_ell}.
  We now prove that $S$ is a system of positive roots. It is clear that $\beta$ and $-\beta$ cannot be both in $S$ otherwise $\mathfrak g_\beta \oplus \mathfrak g_{-\beta} = \mathfrak g_\beta \oplus \overline{\mathfrak g_{\beta}} \subset \mathfrak v \cap \overline{\mathfrak v} = \operatorname{span}_\C \{X\} \subset \mathfrak t'_\C$ which is a contradiction.

  And so we know that $|S| \leq |\Delta'|/2 = (\dim \mathfrak g_\C - \dim \mathfrak t_\C')/2 = k - r$.
  We also know from \eqref{eq:root_space_decomp_v_ell} that $k+1 = \dim \mathfrak v = \dim \mathfrak e + |S| = (r+1) + |S|$, since $\dim \mathfrak m = r$ and we conclude that $|S| = k - r$ and so $S$ contains exactly one of the $\beta$ and $-\beta$ for all $\beta \in \Delta'$. We also have that if $\beta, \gamma \in S$ and $\beta + \gamma \in \Delta'$, then $\mathfrak g_{\beta + \gamma} = [\mathfrak g_\beta, \mathfrak g_\gamma] \subset \mathfrak v$ by \cite[Lemma 6.3.5]{hilgert_structure_2012} and because $\mathfrak v $ is a subalgebra, and so $\beta + \gamma \in S$. And we proved that $S$ is a system of positive roots. We define $\Delta_+' \doteq S$ and $\mathfrak b_{\mathfrak t} = \bigoplus_{\beta \in \Delta_+'} \mathfrak g_\beta$ and \eqref{eq:root_space_decomp_v_ell} has the form $\mathfrak v = \mathfrak e \oplus \mathfrak b_{\mathfrak t}$ concluding the proof.
\end{proof} 

The following is an immediate consequence of the previous theorem.

\begin{cor}
  Every left-invariant elliptic Lie algebra of minimal rank on a compact Lie group of odd dimension
  is a solvable Lie algebra.
\end{cor}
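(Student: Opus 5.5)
By Theorem~\ref{thm:decomposition}, there is a maximal abelian subalgebra $\mathfrak t$ (more precisely, the Cartan subalgebra $\mathfrak t'_\C=\operatorname{Ad}_g(\mathfrak t_\C)$ produced in its proof) and a system of positive roots $\Delta'_+$ such that
\[
\mathfrak v=\mathfrak e\oplus\mathfrak b_{\mathfrak t},\qquad \mathfrak b_{\mathfrak t}=\bigoplus_{\beta\in\Delta'_+}\mathfrak g_\beta .
\]
The plan is to exhibit a short chain of ideals and use the standard fact that an extension of a solvable Lie algebra by a solvable ideal is solvable.

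\textbf{Step 1: $\mathfrak b_{\mathfrak t}$ is a solvable ideal of $\mathfrak v$.} This is Remark~\ref{rmk:b_t_nilpotent_ideal}, read with $\mathfrak t'_\C$ in place of $\mathfrak t_\C$.
\begin{itemize}
\item It is an ideal because $[\mathfrak e,\mathfrak g_\beta]\subset\mathfrak g_\beta$ for every root $\beta$.
\item It is closed under brackets because $[\mathfrak g_\beta,\mathfrak g_\gamma]\subset\mathfrak g_{\beta+\gamma}$. When $\beta,\gamma\in\Delta'_+$, the sum $\beta+\gamma$ is never $0$. Whenever it is a root, it is again positive.
\item It is nilpotent: fix a regular element $H\in\mathfrak t'$. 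Then $-i\beta(H)$ is real and nonzero for each root, and we may choose $H$ so that this value is positive on $\Delta'_+$. Since $\Delta'_+$ is finite, the lower central series of $\mathfrak b_{\mathfrak t}$ only involves roots whose $H$-height grows strictly at each step, so it terminates. Being nilpotent, $\mathfrak b_{\mathfrak t}$ is solvable.
\end{itemize}

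\textbf{Step 2: the quotient is abelian.} The quotient $\mathfrak v/\mathfrak b_{\mathfrak t}$ is isomorphic to $\mathfrak e$, and $\mathfrak e\subset\mathfrak t'_\C$ is abelian. Equivalently, $[\mathfrak v,\mathfrak v]\subset\mathfrak b_{\mathfrak t}$, since the $\mathfrak e$-component of any bracket vanishes.

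\textbf{Step 3: conclusion.} From Step 2, $\mathfrak v^{(1)}=[\mathfrak v,\mathfrak v]\subset\mathfrak b_{\mathfrak t}$. Hence $\mathfrak v^{(n+1)}\subset\mathfrak b_{\mathfrak t}^{(n)}$ for all $n$. By Step 1 the right-hand side is $0$ for $n$ large, so $\mathfrak v$ is solvable.

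\textbf{Main obstacle.} There is essentially none. The only point requiring care is that the decomposition lives on the conjugated Cartan subalgebra $\mathfrak t'_\C$, so the nilpotency of $\mathfrak b_{\mathfrak t}$ must be invoked relative to the positive system $\Delta'_+$. The argument in Step 1 is written in those terms.
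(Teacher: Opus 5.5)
Your proof is correct and follows the paper's argument: decompose $\mathfrak v=\mathfrak e\oplus\mathfrak b_{\mathfrak t}$, note that $\mathfrak b_{\mathfrak t}$ is a nilpotent (hence solvable) ideal with abelian quotient $\mathfrak v/\mathfrak b_{\mathfrak t}\cong\mathfrak e$, and conclude. The only difference is that you prove the nilpotency of $\mathfrak b_{\mathfrak t}$ and the extension property by hand, via a height function and the inclusion $\mathfrak v^{(n+1)}\subset\mathfrak b_{\mathfrak t}^{(n)}$, where the paper cites Hilgert--Neeb and Serre. Your one unjustified step is the choice of a regular $H\in\mathfrak t'$ with $-i\beta(H)>0$ on $\Delta'_+$; it is covered by the standard fact that a closed set of roots containing exactly one of each pair $\pm\beta$ is the positive system of some Weyl chamber, which the paper's citation also implicitly relies on.
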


\begin{proof}
  Let $\mathfrak v$ be a left-invariant elliptic Lie algebra of minimal rank on a compact Lie group
  $G$ of odd dimension. By using Theorem \ref{thm:decomposition}, we write $\mathfrak v = \mathfrak e
  \oplus \mathfrak b_{\mathfrak t}$. Now the proof follows from Remark \ref{rmk:on_properties_L_alpha}. In fact, $\mathfrak e$ is
  abelian, and therefore solvable, and $\mathfrak b_{\mathfrak t}$ is a solvable ideal of $\mathfrak v$ with $\mathfrak e \cong \mathfrak v / \mathfrak b_{\mathfrak t}$ solvable. Hence, $\mathfrak v$ is solvable
  \cite[Chapter 1, Proposition 2]{serre_complex_2001}.
\end{proof}

\section{Local structure of $\mathrm T$-type elliptic structures}
\label{sec:local_structure}

In this section, we show that $\mathrm T$-type elliptic Lie algebras on $G$ are locally trivial, that is, they
can be written as a product of a complex structure on an open subset of $\Omega = G / \T$ and an
elliptic one on the maximal torus $\T$. The main tools are
\cite[Lemma 4.2]{jahnke_closed_2026}, restated below as Lemma \ref{lem:LocalSection}, and the standard principal bundle structure of
$\pi : G \to \Omega$ recalled in Proposition \ref{prp:LocalIso}.

The following lemma gives us local sections for submersions compatible with an elliptic and a
complex structure. We will use it later for the projection $G \to \Omega = G / \T$.

\begin{lem}\label{lem:LocalSection}
  Let $M$ and $N$ be smooth connected manifolds. Assume that $M$ has an elliptic structure $\mathcal
  V$ and $N$ has a complex structure $\mathcal W$. Let $f:M\rightarrow N$ be a smooth surjective map such
  that $f_{\ast}(\mathcal V_x)=\mathcal W_{f(x)}$ for all $x\in M$. Then, for every $y_0\in N$ there
  exists an open neighborhood $V$ of $y_0$ and a map $\sigma:V\rightarrow M$ such that
  $f\circ\sigma(y)=y$ for all $y\in V$, and $\sigma_{\ast}(\mathcal W_y)\subset\mathcal
  V_{\sigma(y)}$ for all $y\in V$.
\end{lem}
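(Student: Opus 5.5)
The plan is to reduce the statement to a holomorphic-section problem for the quotient structure and then use the Newlander--Nirenberg theorem on $N$ together with the local structure of elliptic structures on $M$. Since the question is local, fix $x_0\in M$ with $f(x_0)=y_0$.

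\textbf{Step 1: $f$ is a submersion near $x_0$.} From $f_*(\mathcal V_x)=\mathcal W_{f(x)}$, taking complex conjugates gives $f_*(\overline{\mathcal V}_x)=\overline{\mathcal W}_{f(x)}$. Ellipticity, $\mathcal V+\overline{\mathcal V}=T_\C M$, then gives
\[
f_*(T_\C M|_x)=\mathcal W_{f(x)}\oplus\overline{\mathcal W}_{f(x)}=T_\C N|_{f(x)}.
\]
So $f$ is a submersion everywhere. Write $n=\dim_\C\mathcal W$ and $\dim_\R N=2n$.

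\textbf{Step 2: choose adapted coordinates.} By Newlander--Nirenberg, take holomorphic coordinates $w_1,\dots,w_n$ near $y_0$. Put $Z_j=w_j\circ f$. Then for $L\in\mathcal V$ we have $L Z_j=(f_*L)w_j=0$, because $f_*L\in\mathcal W$ and $\mathcal W$ annihilates holomorphic functions. Hence the $Z_j$ are solutions of $\mathcal V$. Moreover $dZ_1,\dots,dZ_n$ are linearly independent, because $f$ is a submersion and $dw_j$ are independent. By the local normal form for elliptic structures (Berhanu--Cordaro--Hounie), near $x_0$ there are coordinates $(x,y,t)$ and first integrals $z_1,\dots,z_m$, $z_j=x_j+iy_j$, such that $\mathcal V^\perp$ is spanned by $dz_1,\dots,dz_m$. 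Here $\mathcal V$ is spanned by $\partial/\partial\bar z_j$ and $\partial/\partial t_k$.

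\textbf{Step 3: the $Z_j$ are holomorphic in $z$.} Each $Z_j$ is annihilated by $\partial_{\bar z}$ and $\partial_t$, so $Z_j=h_j(z)$ with $h_j$ holomorphic. Since the $dZ_j$ are independent, the map $h=(h_1,\dots,h_n)$ is a holomorphic submersion $\C^m\supset U\to\C^n$. The holomorphic implicit function theorem then gives a holomorphic local section $s$ of $h$ near $h(z_0)$, with $h\circ s=\mathrm{id}$.

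\textbf{Step 4: define the section.} Set
\[
\sigma(y)=\bigl(s(w(y)),\,t_0\bigr)
\]
in the coordinates $(z,t)$, where $t_0$ is the $t$-coordinate of $x_0$. Then $w\circ f\circ\sigma=h\circ s\circ w=w$, so $f\circ\sigma=\mathrm{id}$. The map $\sigma$ is holomorphic into the $z$-slice. Therefore $\sigma_*\partial_{\bar w}$ lies in the span of the $\partial_{\bar z}$, which is contained in $\mathcal V$. This gives $\sigma_*\mathcal W\subset\mathcal V$.

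The step I expect to be the main obstacle is the use of the local normal form for elliptic structures in Step 2. It requires checking that $\mathcal V\cap\overline{\mathcal V}$ has constant rank, which holds because $\mathcal V$ is a bundle. It also requires checking that the first integrals $Z_j$ factor through $z$ alone, which is the content of Step 3.
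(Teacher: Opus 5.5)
Your argument is correct. The paper does not prove this lemma itself; it only cites \cite[Lemma 4.2]{jahnke_closed_2026}. So there is no in-text route to compare against, and your proposal works as a self-contained proof.

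Each step checks out:
\begin{itemize}
\item Step 1 correctly uses the equality $f_*(\mathcal V_x)=\mathcal W_{f(x)}$, together with its conjugate, to get that $f$ is a submersion.
\item The pulled-back coordinates $Z_j=w_j\circ f$ are solutions of $\mathcal V$ with independent differentials.
\item On a product domain in the normal-form coordinates they reduce to a holomorphic submersion $h$ in $z$.
\item The section $\sigma(y)=(s(w(y)),t_0)$ pushes $\partial_{\bar w}$ into $\operatorname{span}\{\partial_{\bar z}\}\subset\mathcal V$.
\end{itemize}

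Conceptually, $f_*(\mathcal V\cap\overline{\mathcal V})\subset\mathcal W\cap\overline{\mathcal W}=0$. Hence $f$ is constant on the plaques of the real foliation spanned by $\partial_t$, and it factors through the local leaf space, which the normal form identifies with a domain in $\C^m$. The factored map is your $h$, and $\sigma$ is a holomorphic section of $h$ placed on the complex slice $\{t=t_0\}$.

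The price is that you rely on two integrability theorems: Newlander--Nirenberg on $N$, and Nirenberg's local normal form (local integrability) for elliptic structures on $M$. The second is the real input. In the paper's application, $M=G$ and $\mathcal V$ is left-invariant, hence real-analytic. There the normal form already follows from the elementary real-analytic complex Frobenius theorem.

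One small correction to your closing remark. The constant rank of $\mathcal V\cap\overline{\mathcal V}$ does not follow merely from $\mathcal V$ being a bundle. It follows from ellipticity: since $\mathcal V_x+\overline{\mathcal V}_x=T_\C M|_x$, we get $\dim_\C(\mathcal V_x\cap\overline{\mathcal V}_x)=2\operatorname{rank}\mathcal V-\dim M$ at every point.
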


\begin{proof}
  See \cite[Lemma 4.2]{jahnke_closed_2026}.
\end{proof}

The following is a standard result in the structure of Lie groups and is included here for the convenience of the reader.

\begin{prp}\label{prp:LocalIso}
  Let $G$ be a connected and compact Lie group and let $\mathbb T$ be a maximal torus in $G$. Then $G$
  is a principal fiber bundle with structure group $\mathbb T$ and base space $\Omega=G/\mathbb T$.
  In other words there exists a smooth right action $R:G\times\mathbb T\rightarrow G$ (which is the
  right action of $\mathbb T$ on $G$) such that \begin{enumerate}
    \item $\Omega=G/\mathbb T$ has a manifold structure making the projection $\pi:G\rightarrow
    \Omega$ smooth,
    \item $G$ is locally trivial, i.e. there is an open cover $\{V_j\}$ of $\Omega$ and
    diffeomorphisms $\Phi_j:\pi^{-1}(V_j)\rightarrow V_j\times\mathbb T$ such that
    $$\Phi^{-1}_j(y,st)=\Phi_j^{-1}(y,s)t,$$ for all $y\in V_j$ and all $s,t\in\mathbb T$.
  \end{enumerate}
\end{prp}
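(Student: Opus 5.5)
The plan is to use the standard facts about quotients of compact Lie groups by closed subgroups together with an explicit construction of local trivializations from local sections of $\pi$.

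\emph{Step 1: the smooth structure on $\Omega$.} Since $\mathbb T=\exp(\mathfrak t)$ is a compact, hence closed, subgroup of $G$, Cartan's closed subgroup theorem makes $\mathbb T$ an embedded Lie subgroup. The quotient theorem for homogeneous spaces then applies: it gives $\Omega=G/\mathbb T$ a unique manifold structure for which $\pi:G\to\Omega$ is a smooth submersion. The right action $R(g,t)=gt$ is the restriction of the multiplication, so it is smooth. It is also free, and its orbits are exactly the fibres of $\pi$.

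\emph{Step 2: local sections.} Choose a complement $\mathfrak p$ of $\mathfrak t$ in $\mathfrak g$; the orthogonal complement with respect to the ad-invariant inner product will do. Consider the map $\mathfrak p\times\mathfrak t\to G$ given by $(Y,T)\mapsto \exp(Y)\exp(T)$. Its differential at $(0,0)$ is the identity on $\mathfrak g=\mathfrak p\oplus\mathfrak t$, so it is a local diffeomorphism near the origin. Hence there is a neighbourhood $U_0\subset\mathfrak p$ of $0$ such that $\pi\circ\exp|_{U_0}$ is a diffeomorphism onto an open neighbourhood $V_0$ of $\pi(e)$.

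\emph{Step 3: the trivializations.} For arbitrary $g\in G$, left-translate this neighbourhood to $V_g=g V_0$. Define the local section $\sigma_g=L_g\circ\exp\circ(\pi\circ\exp|_{U_0})^{-1}\circ L_{g^{-1}}$ on $V_g$, and set
\[
\Phi_g^{-1}(y,s)=\sigma_g(y)s .
\]
This map is smooth, and it is bijective onto $\pi^{-1}(V_g)$ because the action is free and transitive on fibres. Its inverse is $x\mapsto(\pi(x),\sigma_g(\pi(x))^{-1}x)$, which is smooth since group operations are smooth. The equivariance $\Phi_g^{-1}(y,st)=\Phi_g^{-1}(y,s)t$ is immediate from associativity. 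Compactness of $\Omega$ (or simply the sets $V_g$ for $g\in G$) then gives the required cover $\{V_j\}$.

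\emph{Expected obstacle.} The only nontrivial ingredient is the existence of the manifold structure on the quotient (Step 1), which is the standard closed-subgroup quotient theorem. I would cite it rather than reprove it, for instance from \cite{hilgert_structure_2012}.
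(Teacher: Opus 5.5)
Your proposal is correct and follows the same route as the paper. Both arguments use the closed-subgroup quotient theorem (the paper cites \cite[Theorem 10.1.10, Corollary 10.1.11]{hilgert_structure_2012}) to obtain the manifold structure on $\Omega$ and smooth local sections $\sigma_j$ of $\pi$, then define the trivialization by $\Phi_j^{-1}(y,s)=\sigma_j(y)s$, with equivariance following from associativity. The only difference is that you construct the local sections explicitly, via the exponential of a complement $\mathfrak p$ of $\mathfrak t$ followed by left translation, and you check that the inverse is smooth; the paper leaves both of these to the citation.
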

\begin{proof}
  This is a well-known result. See \cite[Theorem 10.1.10, Corollary
  10.1.11]{hilgert_structure_2012}. It is possible to find a covering $\{V_j\}$ of $\Omega$ and
  smooth sections $\sigma_j$ of the quotient map $\pi: G \rightarrow \Omega$ so that we obtain the
  following diffeomorphism onto an open subset of $G$ which we denote by $U_j$: $$V_j \times \mathbb T\ni(v,t)\mapsto\Psi_j(v,t)=\sigma_j(v)t\in\sigma_j(V_j)\mathbb
  T= U_j.$$

  The local trivialization follows by defining $\Phi_j=\Psi_j^{-1}$ and noting that
  $$\Phi_j^{-1}(y,st)=\Psi_j(y,st)=\sigma_j(y)st=\Psi_j(y,s)t=\Phi_j^{-1}(y,s)t.$$
\end{proof}

Notice that the last proposition holds if, instead of the maximal torus $\mathbb T$, we
consider any closed subgroup $K \subset G$.

\begin{prp}
  \label{prop:structurebundle}
  Let $G$ be a connected and compact Lie group endowed with a left-invariant elliptic structure $$\mathfrak
  v=\mathfrak e \oplus\mathfrak b_{\mathfrak t},$$ where $\mathfrak e \subset \mathfrak t_{\mathbb C}$
  is an elliptic subalgebra of the Cartan subalgebra $\mathfrak t_\C$ and $\mathbb T=\exp(\mathfrak
  t)\subset G$ is a maximal torus. Consider $\Omega=G/\mathbb T$ with the complex structure
  $\pi_{\ast}(\mathfrak v)$ (see \cite[Lemma 4.1.7]{jahnke_top-degree_2019} and \cite[Problem 2.57]{gadea_analysis_2013}), where $\pi:G\rightarrow \Omega$ is the natural projection and let
  $\Omega\times\mathbb T$ be endowed with the elliptic structure given by $\pi_{\ast}(\mathfrak
  v) \oplus \mathfrak e$. Then there exists a finite covering $\{V_j\}$ of $\Omega$ and local section
  $\sigma_j:V_j\rightarrow G$ such that
  \[
    \Psi_j:V_j\times\mathbb T\ni(v,t)\mapsto \sigma_j(v)t\in\sigma_j(V_j)\mathbb T=\pi^{-1}(V_j)
  \]
  satisfies $(\Psi_{j\ast})_{(v,t)}(\pi_{\ast}(\mathfrak v|_{\sigma_j(v)}) \oplus \mathfrak e|_t) =
  \mathfrak v_{\sigma_j(v)t}$.
\end{prp}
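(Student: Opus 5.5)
The plan is to apply Lemma \ref{lem:LocalSection} to the projection $\pi:G\to\Omega$, and then check the claimed identity for the differential of $\Psi_j$ by splitting $\pi_*(\mathfrak v|_{\sigma_j(v)})\oplus\mathfrak e|_t$ into its horizontal and vertical parts. First I would check the hypotheses of Lemma \ref{lem:LocalSection}. The bundle $\mathcal V$ defined by $\mathfrak v$ is elliptic on $G$. On $\Omega$ we take $\mathcal W=\pi_*(\mathcal V)$. This is well defined: $\mathfrak v$ is $\operatorname{Ad}(\T)$-invariant, because $\mathfrak e\subset\mathfrak t_\C$ is fixed and each $\mathfrak g_\alpha$ is preserved, so the pushforward is independent of the point in the fiber. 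It is a complex structure because $\ker\pi_*$ at $g$ is the left translate of $\mathfrak t_\C$, and $\mathfrak v\cap\mathfrak t_\C=\mathfrak e$. Modulo $\mathfrak t_\C$, the image of $\mathfrak v$ is $\mathfrak b_{\mathfrak t}$, and $\mathfrak b_{\mathfrak t}\oplus\overline{\mathfrak b_{\mathfrak t}}$ is a complement of $\mathfrak t_\C$. I take this from the cited references. With $f=\pi$, Lemma \ref{lem:LocalSection} gives, around each $y_0\in\Omega$, a neighborhood $V$ and a section $\sigma$ with $\sigma_*(\mathcal W_y)\subset\mathcal V_{\sigma(y)}$. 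Since $\Omega$ is compact, finitely many such $V_j$ cover it.

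Next I compute $d\Psi_j$ at $(v,t)$. For $w\in\mathcal W_v$ we have $d\Psi_j(w,0)=(R_t)_*\sigma_{j*}w$, and $\sigma_{j*}w\in\mathcal V_{\sigma_j(v)}$. For $Y\in\mathfrak e$, viewed as a left-invariant field on $\T$, $d\Psi_j(0,Y_t)=\frac{d}{ds}\sigma_j(v)t\exp(sY)|_{s=0}$. This is the left-invariant field $Y$ evaluated at $\sigma_j(v)t$, so it lies in $\mathcal V_{\sigma_j(v)t}$. The key point is that right translation by $t\in\T$ preserves $\mathcal V$. Indeed, $(R_t)_*(L_{g})_*Z=(L_{gt})_*\operatorname{Ad}_{t^{-1}}Z$, and $\operatorname{Ad}_{t^{-1}}\mathfrak v=\mathfrak v$ by the invariance noted above. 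Hence $(R_t)_*\mathcal V_{\sigma_j(v)}=\mathcal V_{\sigma_j(v)t}$, and the image of $\mathcal W_v\oplus\mathfrak e|_t$ under $d\Psi_j$ lies in $\mathcal V_{\sigma_j(v)t}$.

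To finish, I compare dimensions. The map $\Psi_j$ is a diffeomorphism by Proposition \ref{prp:LocalIso}, so $d\Psi_j$ is injective. We have $\dim\mathcal W_v=\dim\mathfrak b_{\mathfrak t}$, and $\dim\mathcal W_v+\dim\mathfrak e=\dim\mathfrak v$. So the inclusion is an equality, which is the claim (reading $\mathfrak v_{\sigma_j(v)t}$ as the fiber $\mathcal V_{\sigma_j(v)t}$). The main obstacle is the well-definedness of $\pi_*(\mathfrak v)$ as a complex structure, together with the $\operatorname{Ad}(\T)$-invariance of $\mathfrak v$ used for right translations. The step I would check most carefully is that $\operatorname{Ad}_t$ acts on $\mathfrak g_\alpha$ by a scalar, namely the character $e^{\alpha}$, so that every root space is preserved.
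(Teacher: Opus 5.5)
Your proposal is correct and follows the paper's own route. You take compatible sections from Lemma \ref{lem:LocalSection}, use the trivialization of Proposition \ref{prp:LocalIso}, split $d\Psi_j$ into a left-translated vertical part and a right-translated horizontal part, and conclude by counting dimensions. Your verification of right-$\T$-invariance via $(R_t)_*(L_g)_*=(L_{gt})_*\operatorname{Ad}_{t^{-1}}$, with $\operatorname{Ad}_t$ acting on each $\mathfrak g_\alpha$ by a scalar, is a cleaner form of the paper's bracket computation; you also check the well-definedness of $\pi_*\mathfrak v$, which the paper leaves to the references.
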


\begin{proof}
  By Proposition \ref{prp:LocalIso}, we have an open (finite, due to compactness) cover
  $\{V_j\}$ of $\Omega$ such that, for every $j$, the map $$\Psi_j:V_j\times\mathbb
  T\ni(v,t)\mapsto\sigma_j(v)t\in\sigma_j(V_j)\mathbb T=\pi^{-1}(V_j)$$ is a diffeomorphism onto
  $U_j=\pi^{-1}(V_j)$. Furthermore, from Lemma \ref{lem:LocalSection}, we can choose each $\sigma_j$ compatible with the structures. This will make each map $\Psi_j$ compatible with the involutive structures defined in their domains and ranges. In fact, let $X\oplus Y\in \pi_{\ast}(\mathfrak v|_{\sigma_j(v)}) \oplus\mathfrak e|_t$. We have
  \[ \begin{aligned}
    (\Psi_{j\ast})_{(v,t)}( X\oplus Y) & =(\Psi_{j\ast})_{(v,t)}(0\oplus Y)+(\Psi_{j\ast})_{(v,t)}(X\oplus 0) \\
  &=(L_{\sigma_j(v)})_{\ast}(Y)+(R_t)_{\ast}(\sigma_{j\ast}(X)).
  \end{aligned} \]
  The term $(L_{\sigma_j(v)})_{\ast}(Y)$ belongs to $\mathfrak v|_{\sigma_j(v)t}$ because of
  left-invariance. On the other hand, the term $\sigma_{j\ast}(X)$ belongs to $\mathfrak
  v_{\sigma_j(v)}$. We write $\sigma_{j\ast}(X)=X'+\sum_{\alpha\in\Delta_+}X_{\alpha}$ with
  $X'\in\mathfrak e|_{\sigma_j(v)}$ and $X_{\alpha}\in\mathfrak g_{\alpha}|_{\sigma_{j}(v)}$. Clearly we
  have $(R_t)_{\ast}(X')\in\mathfrak v|_{\sigma_j(v)t}$, since $\mathfrak e\subset\mathfrak
  t_{\mathbb C}$ is abelian and $t \in \T$. Let $W\in \mathbb CT_t\mathbb T$ and notice
  (since $(R_t)_{\ast}$ is a Lie algebra homomorphism) that
  \[ \begin{aligned}
    [W_{\sigma_j(v)t},(R_t)_{\ast}X_{\alpha}] & = [(R_t)_{\ast}(W_{\sigma_j(v)}),(R_t)_{\ast}(X_{\alpha})] \\
    & =(R_t)_{\ast}[W_{\sigma_j(v)},X_{\alpha}] \\
    & =\alpha(W)(R_t)_{\ast}(X_{\alpha}).
    \end{aligned} \]

  That is, we have $(R_t)_{\ast}(X_{\alpha})\in \operatorname{span}_\C \{ X_{\alpha} \} =\mathfrak
  g_{\alpha,\sigma_j(v)}=\mathfrak g_{\alpha,\sigma_j(v)t}$.
  Therefore, we proved that
  \[
  (\Psi_{j\ast})_{(v,t)}( \pi_{\ast}(\mathfrak v|_{\sigma_j(v)}) \oplus \mathfrak e|_t ) \subset \mathfrak v_{\sigma_j(v)t}
  \]
  and since $\Psi_{j}$ is a diffeomorphism, by dimensional restrictions, the last inclusion is an
  equality.
\end{proof}

\section{The cohomology of $\mathrm T$-type elliptic structures}
\label{sec:toral_elliptic_cohomology}

In this section, we prove the main result regarding the cohomology associated with elliptic
structures of $T$-type. For this, we need to briefly discuss the notation and some facts about sheaf cohomology.
Regarding sheaf cohomology, we need the Künneth formula, the Mayer-Vietoris sequence, and a result
connecting the usual cohomology spaces as defined in the preliminaries section to the sheaf
theoretical definition.

Let $M$ be any smooth manifold endowed with an elliptic structure $\mathcal V$. For a given open set
$U \subset M$, we denote by $\mathcal S(U)$ the set of smooth functions on $U$ that are annihilated
by $\dext'$. It is easy to verify that this gives rise to a sheaf, called the \emph{sheaf of
solutions of $\mathcal V$}. In the special case in which $\mathcal V$ defines a complex structure on
$M$, the sheaf of solutions is called the \emph{sheaf of holomorphic functions on $M$} and it is
denoted by $\mathcal O$. To each sheaf, there are sheaf cohomology spaces denoted by $H^q(M, \mathcal S)$
for $q = 0, 1, \ldots, n = \operatorname{rank} \mathcal V$. Since elliptic structures are locally
solvable \cite[Section VI.7]{treves_hypo-analytic_1992}, the cohomology spaces discussed in the
preliminaries section of this paper are isomorphic to the sheaf-theoretical objects. For an
introduction to sheaf theory, we refer the reader to \cite{warner_foundations_1983} and for a full
treatise on sheaf theory and several tools, we refer the reader to \cite{bredon_sheaf_1997}.

\begin{thm}\label{thm:KunnethIso}
  Let $G$ be a compact and connected Lie group. Let $\mathfrak v = \mathfrak e \oplus \mathfrak b_{\mathfrak t}$ be a left-invariant elliptic
  structure of T-type on $G$, $\Omega = G / \mathbb T,$ and let $\mathcal W$ be the induced complex
  structure on $\Omega$ given by $\mathcal W = \pi_*(\mathfrak v)$.
   
  Then, for any $q$ there are isomorphisms $$H^{q}(G,\mathfrak v) \cong \bigoplus_{\nu+\mu=q}H^{\nu}(\mathbb
  T,\mathfrak e) \otimes H^{\mu}(\Omega,\mathcal W).$$
\end{thm}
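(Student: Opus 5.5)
The plan is a Mayer--Vietoris argument over the finite cover $\{V_j\}$ of $\Omega$ from Proposition \ref{prop:structurebundle}. Over each chart the structure is a product, and we need to compare the cohomology there with the right-hand side of the formula. We work throughout with sheaf cohomology of the solution sheaves $\mathcal S$. By local solvability of elliptic structures, $H^q(U,\mathfrak v)\cong H^q(U,\mathcal S)$ for every open $U$, and likewise on $\Omega$ with $\mathcal O$.

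The first step is a local Künneth formula. For an open set $V\subset\Omega$ that is a finite intersection of the $V_j$, we want
\[
H^q(\pi^{-1}(V),\mathfrak v)\cong\bigoplus_{\nu+\mu=q}H^\nu(\T,\mathfrak e)\otimes H^\mu(V,\mathcal W).
\]
On a single $V_j$, Proposition \ref{prop:structurebundle} identifies $(\pi^{-1}(V_j),\mathfrak v)$ with the product $(V_j,\mathcal W)\times(\T,\mathfrak e)$. By Section \ref{sec:elliptic_torus} and Fourier series, $H^\bullet(\T,\mathfrak e)$ is finite dimensional, every class has a left-invariant (hence constant-coefficient) representative, and the $\dext'_{\mathfrak e}$-complex splits in Fourier modes. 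In each mode $\xi\ne0$ the complex on the product is the mapped cone of a Koszul-type complex with an invertible coefficient, so it is exact, with solution operators that are continuous in the parameter $v\in V$. Since these operators are built from Fourier multipliers, they preserve smoothness in $v$ and commute with $\overline\partial_V$. The zero mode gives $\Wedge^\bullet\mathfrak e^*$ with zero differential tensored with the Dolbeault complex on $V$. This yields the product formula on each chart, with maps induced by the pullbacks $\mathrm{pr}_1^*,\mathrm{pr}_2^*$ and the wedge product.

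On intersections $V_i\cap V_j$ the two trivializations differ by $(v,t)\mapsto(v,\tau_{ij}(v)t)$ with $\tau_{ij}$ valued in $\T$. Since $\tau_{ij}=\sigma_i^{-1}\sigma_j$ and both sections are compatible with the structures, $\tau_{ij}$ is a $\T$-valued map with $\tau_{ij*}\mathcal W\subset\mathfrak e$. Translations of $\T$ act trivially on left-invariant forms, and a homotopy argument applies: $\T$ is abelian and $\mathfrak e$-classes are represented by constant forms. Together these should show that the transition maps act as the identity on the factor $H^\nu(\T,\mathfrak e)$. This is the step I expect to be the main obstacle. To handle it, I would write a constant representative $\omega$ pulled back by $(v,t)\mapsto\tau(v)t$ as $\omega+\tau^*(\ldots)$ and check that the correction term is $\dext'$-exact, using $\tau^{-1}d\tau\in\mathcal W^*$-valued forms. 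The cleaner alternative is to avoid the charts entirely: define the global map
\[
\Phi:\bigoplus_{\nu+\mu=q}H^\nu(\T,\mathfrak e)\otimes H^\mu(\Omega,\mathcal W)\to H^q(G,\mathfrak v),\qquad [\theta]\otimes[\eta]\mapsto[\tilde\theta\wedge\pi^*\eta],
\]
where $\tilde\theta$ is the left-invariant form on $G$ extending $\theta\in\Wedge^\nu\mathfrak e^*$ by zero on $\mathfrak b_{\mathfrak t}$. This form is $\dext'$-closed because $\mathfrak b_{\mathfrak t}$ is an ideal (Remark \ref{rmk:b_t_nilpotent_ideal}) and $[\mathfrak v,\mathfrak v]\subset\mathfrak b_{\mathfrak t}$.

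The map $\Phi$ makes sense on every $\pi^{-1}(V)$ and is natural with respect to restrictions. By the local step it is an isomorphism for every $V$ that is a finite intersection of the $V_j$. Both sides satisfy Mayer--Vietoris sequences: the left through the exact sequence of $H^\nu(\T,\mathfrak e)\otimes(\cdot)$ (a finite-dimensional vector space, hence flat) applied to Dolbeault Mayer--Vietoris on $\Omega$, and the right through Mayer--Vietoris for $\mathcal S$ on $G$. Since $\Phi$ commutes with the connecting maps, an induction on the number of open sets using the five lemma, as in the standard proof of the de Rham Künneth theorem, gives the isomorphism on $\pi^{-1}(\Omega)=G$. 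Finiteness of the cover is what makes the induction terminate.
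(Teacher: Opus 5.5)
Your overall route is the paper's. The paper only invokes the Leray--Hirsch argument of \cite{jacobowitz_levi-flat_2023} on the trivializations of Proposition~\ref{prop:structurebundle}, and your left-invariant forms $\tilde\theta$ are exactly the global classes restricting to a basis of $H^\bullet(\T,\mathfrak e)$ on each fiber that such an argument requires. The closedness of $\tilde\theta$, the Fourier-mode proof of the product formula and the Mayer--Vietoris/five-lemma induction are all sound.

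The step that does not follow as written is ``by the local step, $\Phi$ is an isomorphism over $\pi^{-1}(V)$.'' Your local isomorphism $K_V$ sends $[\theta]\otimes[\eta]$ to $[\mathrm{pr}_2^*\theta\wedge\mathrm{pr}_1^*\eta]$, but $\Psi_j^*\tilde\theta\neq\mathrm{pr}_2^*\theta$. Indeed $(\Psi_j)_*(X\oplus 0)=(R_t)_*\sigma_{j*}X$ has $\mathfrak e$-component $\beta_j(X)$, the $\mathfrak e$-part of $(L_{\sigma_j(v)^{-1}})_*\sigma_{j*}X\in\mathfrak e\oplus\mathfrak b_{\mathfrak t}$. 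This is independent of $t$, since $\operatorname{Ad}(t^{-1})$ fixes $\mathfrak t_\C$ and preserves each $\mathfrak g_\alpha$. Moreover $\beta_j\not\equiv 0$ as soon as $\Delta\neq\emptyset$: otherwise $\sigma_j(V_j)$ would be an integral manifold of the left-invariant distribution $\mathfrak b_{\mathfrak t}\oplus\overline{\mathfrak b_{\mathfrak t}}$, which is not involutive. So $\Psi_j^*\tilde\theta$, the pullback of $\theta$ under $X\oplus Y\mapsto\beta_j(X)+Y$, is $\mathrm{pr}_2^*\theta$ plus components in $\Wedge^{\nu-p}\mathfrak e^*\otimes\mathscr C^\infty(V,\Wedge^p\mathcal W^*)$, $p\geq 1$. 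This is the same twisting you anticipated for the transition maps. Passing to a global map does not remove it; it only moves it into the local comparison.

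The repair is short. $\Psi_j^*\tilde\theta$ is closed and independent of $t$, and on $t$-independent forms $\dext'$ acts as $\pm\overline\partial_V$, which preserves the $\mathfrak e$-degree. Hence each bihomogeneous component is closed. It follows that $K_V^{-1}\circ\Phi_V$ sends $[\theta]\otimes[\eta]$ to $[\theta]\otimes[\eta]$ plus an element of $\bigoplus_{p\geq 1}H^{\nu-p}(\T,\mathfrak e)\otimes H^{\mu+p}(V,\mathcal W)$. It is therefore unipotent for the finite filtration by $\nu$, hence bijective, which is all the induction needs.

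Alternatively, shrink the cover to Stein sets. Finite intersections stay Stein, so $H^{p}(V,\mathcal W)=0$ for $p\geq 1$, the correction terms are exact, and $\Phi_V=K_V$.

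Finally, with partitions of unity pulled back from $\Omega$, $\Phi$ as you defined it commutes with the connecting maps only up to the sign $(-1)^\nu$. It commutes exactly if you write $\pi^*\eta\wedge\tilde\theta$ instead, and either way the sign is harmless for the five lemma.
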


\begin{proof}
  It follows from Proposition \ref{prop:structurebundle} that the local trivializations $\Phi_j$ are
  compatible with the elliptic structure $\mathfrak e$ and the elliptic structure obtained from the
  direct sum $\mathcal W \oplus \mathfrak e$, exactly the setting of the Leray-Hirsch argument used
  in \cite{jacobowitz_levi-flat_2023}. In fact, elliptic structures are locally solvable, which
  allows us to use sheaf-theoretic tools, and the cohomology spaces $H^\nu(\mathbb T,\mathfrak e)$ are
  finite-dimensional for all $\nu$, since $\T$ is compact and $\mathfrak e$ is elliptic. Thus, we can 
  compute the cohomology locally via the Künneth formula and glue it via Mayer-Vietoris and the Five
  Lemma. We refer the reader to \cite{jacobowitz_levi-flat_2023} for the details. The argument there
  applies in the same way.
\end{proof}

Now, we are finally ready to prove Theorem \ref{thm:reduction_to_torus}.
\begin{proof}[Proof of Theorem \ref{thm:reduction_to_torus}] From Theorem \ref{thm:KunnethIso}, we
have the isomorphism $$H^{q}(G,\mathfrak v) \cong \bigoplus_{\nu+\mu=q}H^{\nu}(\mathbb T,\mathfrak e)
\otimes H^{\mu}(\Omega,\mathcal W).$$
  From \cite[Chapter 8]{besse_einstein_2008}, $(\Omega,\mathcal W)$ has positive first Chern
  class, so we can apply  \cite[Corollary 11.25]{besse_einstein_2008} to obtain
  $H^{\mu}(\Omega,\mathcal W)=0$ for all $\mu>0$ and $H^{0}(\Omega,\mathcal W)=\mathbb C$. We conclude
  that
  \[
    H^{q}(G,\mathfrak v) \cong H^{q}(\mathbb T,\mathfrak e) \otimes H^{0}(\Omega,\mathcal W)
    \cong H^{q}(\mathbb T,\mathfrak e).
  \]
  
  To conclude, we recall that all elements in the range of \eqref{eq:torus_inclusion} are classes with left-invariant representatives, and so, it is contained in the range of
\eqref{eq:the_homomorphism}. Since $H^q(G, \mathfrak v)$ and $H^{q}(\mathbb T, \mathfrak e)$ are finite-dimensional and have the same dimension, the surjectivity of \eqref{eq:torus_inclusion} follows from linear algebra, and we conclude that \eqref{eq:the_homomorphism} is surjective.
\end{proof}

\section{Elliptic structures on the torus}
\label{sec:elliptic_torus}

In this section, we briefly show how to compute the cohomology of left-invariant elliptic structures on a torus. The formulas presented here are adaptations of ideas from \cite{bergamasco_global_1999}. Detailed computations, in the CR setting, can be seen in \cite{jacobowitz_levi-flat_2023}.

Let $f \in \mathscr C^\infty(\T^N)$. The Fourier coefficients of $f$ are given by
\[
  \widehat f (\xi) \doteq \frac{1}{(2\pi)^N} \int f(x) e^{-i\xi x} dx, \qquad \xi \in \Z^N.
\]

The sequence $\{\widehat f (\xi)\}$ is a rapidly decreasing sequence, that is, for all integers $L > 0$ there exists $C_L$ such that
\[
  |\widehat f (\xi)| \leq \frac{C_L}{(1 + | \xi |)^L}, \qquad \xi \in \Z^N.
\]
with $|\xi|^2 = \sum_{j=1}^N \xi_j^2$.

The following is also easy to verify. Given a rapidly decreasing sequence of complex numbers $\{a_\xi\}_{\xi \in \Z^N}$, we have that $f(x) \doteq \sum_{\xi \in \Z^N} a_\xi e^{i \xi x}$ converges in the $\mathscr C^\infty(\T^N)$-topology to a smooth function such that $\widehat f(\xi) = a_\xi$ for all $\xi \in \Z^N$.

Let $\mathfrak e$ be an elliptic structure on a torus $\mathbb T^N$ of dimension $n \doteq \dim \mathfrak
e \geq \lceil \frac{N}{2} \rceil$\footnote{The symbol $ \lceil x \rceil$, $x \in \R$, denotes the smallest integer greater than or equal to $x$. } and let $m=N-n$. Since the torus is abelian, the structure is bi-invariant. Let $\{L_1,\dots,L_n\}$ be a basis for $\mathfrak e$ and
complete it to a basis for $\mathfrak t^N_{\C}$, denoted $\{L_1,\dots,L_n,L_{n+1},\dots, L_N\}$. Let
$\{\tau_1,\dots,\tau_N\}$ denotes the dual basis.
Without loss of generality, we may assume that $L_{j+n}=\overline{L_j}$ for $j=1,\dots, m$ and that
$L_{i}$ are real for $i \in \{m+1,\dots,n\}$. We can write the basis elements
$$L_j=\sum^N_{l=1}a_{jl}\frac{\partial}{\partial x_l},\quad j=1,\dots,N$$ with
$a_{j,l}=\overline{a_{j+n,l}}\in\mathbb C$ for $j \in \{1,\dots,m\}$, and $a_{j,l}\in\mathbb R$ for
$j \in \{m+1,\dots, n\}$. We write the symbol of $L_j$ as $$\widehat{L_j}(\xi)=i\sum^N_{l=1}a_{jl}\xi_l,\quad \xi\in\mathbb Z^N.$$ Since the Lie algebra $\mathfrak e \subset \mathfrak t_{\C}$ defines an
elliptic structure, for each $\xi \in \Z^N \setminus \{0\}$, at least one of the symbols $\widehat{L_j}(\xi)$ is invertible and so, for each $\xi \in \Z^N \setminus \{0\}$, there is $\sigma \in \{1,\dots, n\}$ such that
\[
  |\widehat{L_{\sigma}}(\xi)|=\max\{|\widehat{L_j}(\xi)|:j=1,...,n\} > 0.
\]

Furthermore, smooth forms $u \in \mathscr C^{\infty}(\mathbb T^N,\UL^q)$ can be written as
\[
  u = \psum_{|I|=q} u_I\tau_I, \footnote{The symbol $'$ on the sum indicates that the sum is over increasing multi-indices $I$.}
\]
with $I=(i_1, \dots, i_q)$ such that $1 \leq i_1 < i_2 < \cdots < i_q \leq n$, $\tau_I = \tau_{i_1} \wedge \dots \wedge \tau_{i_q}$, and $u_I \in \mathscr C^{\infty}(\mathbb T^N)$. The operator $\dext'$ acts on a smooth $q$-form $u$ as
follows $$\dext'u = \psum_{|I| = q} \sum_{j=1}^n (L_ju_I) \tau_j \wedge \tau_I.$$

Notice that, with a basis for $\mathfrak t^N_\C$ fixed, we can define the Fourier series of a form by defining it coefficient-wise. Since for each $q \in \{0, \ldots, n\}$ we can identify $\mathscr C^\infty(\T^N, \UL^q)$ with $\mathscr C^\infty(\T^N) \otimes \UL^q$ and since $\UL^q$ is finite-dimensional, the extension of the Fourier series from $\mathscr C^\infty(\T^N)$ to $\mathscr C^\infty(\T^N, \UL^q)$ is independent of the choice of basis. A choice of coordinates, however, makes computations easy. For example, it is trivial to verify that $\dext'$ acting on a form $\widehat u(\xi)$ is given by
\[
  \widehat {\dext' u}(\xi) = \psum_{|I| = q} \sum_{j=1}^n (\widehat L_j (\xi) \widehat{u_I}(\xi)) \tau_j \wedge \tau_I.
\]

The following lemma gives us a frequency-wise condition for a form to be $\dext'$-closed.

\begin{lem}
  A $q$-form $u\in\mathscr C^{\infty}(\mathbb T^N,\UL^q)$ is a $\dext'$-closed form if and only if
  $$\biggl(\sum_{j=1}^n\widehat{L_j}(\xi)\tau_j\biggr)\wedge\widehat u(\xi)=0,\quad
  \forall\xi\in\mathbb Z^N,$$ where $\widehat u(\xi)=\sum_{|I|=q}\widehat{u_I}(\xi)\tau_I$.
\end{lem}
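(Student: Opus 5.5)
The plan is to pass to Fourier coefficients and use that the Fourier transform is injective on smooth functions on $\T^N$. Coefficientwise linearity of the Fourier transform, together with the formula for $\widehat{\dext' u}(\xi)$ displayed just above, gives
\[
  \widehat{\dext' u}(\xi) = \psum_{|I|=q}\sum_{j=1}^n \widehat L_j(\xi)\,\widehat{u_I}(\xi)\,\tau_j\wedge\tau_I .
\]
The first step is to recognise the right-hand side as the wedge product $\bigl(\sum_{j=1}^n \widehat L_j(\xi)\tau_j\bigr)\wedge \widehat u(\xi)$, where $\widehat u(\xi)=\sum_{|I|=q}\widehat{u_I}(\xi)\tau_I$ is viewed as an element of the finite-dimensional space $\UL^q$. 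This is just bilinearity of $\wedge$ over $\C$, since the $\widehat L_j(\xi)$ and $\widehat{u_I}(\xi)$ are scalars.

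Next I would justify the displayed formula itself, if it needs more than a remark. Each $L_j=\sum_l a_{jl}\partial_{x_l}$ has constant coefficients. Integrating by parts against $e^{-i\xi x}$ on the torus, with no boundary terms, gives $\widehat{L_j f}(\xi)=\widehat L_j(\xi)\widehat f(\xi)$ for every $f\in\mathscr C^\infty(\T^N)$. Applying this to each coefficient $u_I$ and expanding $\tau_j\wedge\tau_I$ in the basis of increasing multi-indices yields the formula, since the Fourier transform commutes with a fixed finite linear recombination of coefficients.

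Finally I would conclude the equivalence. If $\dext' u=0$, then all its Fourier coefficients vanish, which is the stated condition. Conversely, if the wedge product vanishes for all $\xi\in\Z^N$, then every coefficient function of the smooth form $\dext' u$ has identically zero Fourier coefficients. A smooth function on $\T^N$ is the $\mathscr C^\infty$-convergent sum of its Fourier series, as recalled above, so each coefficient function is zero and $\dext' u=0$. I do not expect a genuine obstacle here. The only point requiring care is bookkeeping: the identification of $\widehat{\dext' u}(\xi)$ as a wedge product must be done in the fixed basis $\tau_1,\dots,\tau_n$ of $\UL^1$, keeping track of the signs from reordering $\tau_j\wedge\tau_I$. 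These signs are identical on both sides, so they cause no difficulty.
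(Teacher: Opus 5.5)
Your proposal is correct and is the intended argument: the paper omits the proof and defers to Jacobowitz--Jahnke, but the displayed identity for $\widehat{\dext' u}(\xi)$ just before the lemma, read as the wedge product $\bigl(\sum_j \widehat{L_j}(\xi)\tau_j\bigr)\wedge\widehat u(\xi)$ and combined with uniqueness of Fourier coefficients, is all that is needed. One small correction: the converse direction uses the Fourier uniqueness (inversion) theorem for smooth functions on $\T^N$, which is standard but is not the fact recalled in the paper, since the paper only recalls how to build a smooth function from a rapidly decreasing sequence.
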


\begin{proof}
  Omitted. See \cite{jacobowitz_levi-flat_2023} for details.
\end{proof}

Since $\T^N$ is compact and $\dext'$ forms an elliptic complex, we know that $H^q(\T^N, \mathfrak e)$ is finite dimensional and that $\dext' : \mathscr C^\infty(\T^N, \UL^{q-1}) \to  \mathscr C^\infty(\T^N, \UL^{q})$ has closed range for all $q$ \cite[Chapter IV.5]{wells_differential_2008}.

The following lemma gives us a frequency-wise left-inverse for $\dext'$. For $\xi \in \mathbb Z^N \setminus \{0\}$, and
  $\sigma\in\{1,\dots,n\}$ we use the notation
  $$|\widehat{L_{\sigma}}(\xi)|=\max\{|\widehat{L_j}(\xi)|:j=1,...,n\}.$$

\begin{lem}
\label{lem:dprime_inv}
  Given $u\in\mathscr C^{\infty}(\mathbb T^N,\UL^q)$ a $\dext'$-closed $q$-form with $q \geq 1$. For each $\xi \in \mathbb Z^N \setminus \{0\}$, the $(q-1)-$form
  $$\widehat v(\xi) \doteq \sum_{|J|=q;\sigma\in
  J}\varepsilon_{\sigma,J}\footnote{The symbol $\varepsilon_{\sigma,J}$ denotes the signature of the permutation $(\sigma, J\backslash\sigma)$.}\frac{1}{\widehat{L_{\sigma}}(\xi)}u_J(\xi)\tau_{J\setminus\sigma}$$
  satisfies $$\sum^n_{j=1}\widehat{L_j}(\xi)\tau_j\wedge\widehat v(\xi)=\widehat u(\xi).$$
\end{lem}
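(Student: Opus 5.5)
The plan is to recognize $\widehat v(\xi)$ as an interior product and then apply the anti-derivation identity for contraction. Fix $\xi \in \Z^N\setminus\{0\}$ and write
\[
  \omega(\xi) \doteq \sum_{j=1}^n \widehat{L_j}(\xi)\,\tau_j \in \UL^1 .
\]
Let $\iota_\sigma$ denote the interior product on $\Wedge^\bullet$ of $\operatorname{span}_\C\{\tau_1,\dots,\tau_n\}$ with respect to the vector dual to $\tau_\sigma$, i.e.\ $\iota_\sigma \tau_j = \delta_{\sigma j}$, extended as an anti-derivation of degree $-1$. The index $\sigma$ is the one fixed before the statement, with $|\widehat{L_\sigma}(\xi)|$ maximal and hence nonzero.

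First, I will check that $\widehat v(\xi) = \widehat{L_\sigma}(\xi)^{-1}\,\iota_\sigma \widehat u(\xi)$. If $\sigma\notin J$, then $\iota_\sigma\tau_J=0$. If $\sigma\in J$, reordering gives $\tau_J = \varepsilon_{\sigma,J}\,\tau_\sigma\wedge\tau_{J\setminus\sigma}$, so $\iota_\sigma\tau_J = \varepsilon_{\sigma,J}\,\tau_{J\setminus\sigma}$. Summing over increasing $J$ with $|J|=q$ reproduces exactly the formula in the statement.

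Second, I will apply the anti-derivation identity to the $1$-form $\omega(\xi)$ and the $q$-form $\widehat u(\xi)$:
\[
  \iota_\sigma\bigl(\omega(\xi)\wedge\widehat u(\xi)\bigr) = (\iota_\sigma\omega(\xi))\,\widehat u(\xi) - \omega(\xi)\wedge\iota_\sigma\widehat u(\xi),
  \qquad \iota_\sigma\omega(\xi)=\widehat{L_\sigma}(\xi).
\]
Since $u$ is $\dext'$-closed, the preceding frequency-wise lemma gives $\omega(\xi)\wedge\widehat u(\xi)=0$, so the left-hand side vanishes. Dividing by $\widehat{L_\sigma}(\xi)\neq 0$ then yields $\omega(\xi)\wedge\widehat v(\xi)=\widehat u(\xi)$, which is the claim.

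The only delicate point is keeping the sign conventions consistent. I need the sign in the anti-derivation rule for a $1$-form and the signature $\varepsilon_{\sigma,J}$ to match the convention $\tau_J=\varepsilon_{\sigma,J}\tau_\sigma\wedge\tau_{J\setminus\sigma}$. To guard against an error there, I will verify the identity directly on a monomial $\tau_j\wedge\tau_J$ in the two cases $j=\sigma$ and $j\neq\sigma$.
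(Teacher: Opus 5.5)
Your argument is correct: the paper gives no proof of this lemma and defers to \cite{jacobowitz_levi-flat_2023}, where the identity is checked by the same kind of direct algebraic computation from the frequency-wise closedness relation $\omega(\xi)\wedge\widehat u(\xi)=0$. Your anti-derivation identity $\iota_\sigma(\omega\wedge\widehat u)=\widehat{L_\sigma}(\xi)\,\widehat u-\omega\wedge\iota_\sigma\widehat u$ is a clean way to package that computation. Two small remarks: only $\widehat{L_\sigma}(\xi)\neq 0$ is used, not maximality, and this holds because a real $\xi$ annihilating $\mathfrak e$ also annihilates $\overline{\mathfrak e}$ and hence $\mathfrak t_\C$. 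Also, the coefficients $u_J(\xi)$ in the statement should be read as $\widehat{u_J}(\xi)$.
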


\begin{proof}
  Omitted. See \cite{jacobowitz_levi-flat_2023} for details.
\end{proof}

\begin{lem}
\label{lem:dprime_closed_exact}
  Given a $\dext'$-closed $q$-form $u$, the form
  \[
    u^* = \sum_{\xi\in\mathbb Z^N\setminus\{0\}} e^{i\xi x}\widehat u(\xi)
  \] is $\dext'$-exact.
\end{lem}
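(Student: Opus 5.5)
The plan is to build an explicit primitive frequency by frequency, using Lemma \ref{lem:dprime_inv}, and then show that the resulting series defines a smooth form. For each $\xi \in \Z^N\setminus\{0\}$, choose an index $\sigma = \sigma(\xi)$ realizing the maximum $|\widehat{L_\sigma}(\xi)|$; this maximum is positive by ellipticity. Let $\widehat v(\xi)$ be the $(q-1)$-form given by Lemma \ref{lem:dprime_inv}. Then set
\[
  v \doteq \sum_{\xi \in \Z^N\setminus\{0\}} e^{i\xi x}\, \widehat v(\xi).
\]
If $v$ is a smooth form, the coefficient-wise formula for $\widehat{\dext' v}(\xi)$ stated above, combined with Lemma \ref{lem:dprime_inv}, gives $\widehat{\dext' v}(\xi) = \widehat u(\xi)$ for every $\xi\neq 0$. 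For $\xi = 0$ both sides vanish: $\widehat v(0)=0$ by construction and $\widehat{u^*}(0)=0$. Uniqueness of Fourier coefficients then yields $\dext' v = u^*$. Closedness of $u$ is only used through the frequency-wise criterion in the lemma preceding Lemma \ref{lem:dprime_inv}, which is exactly what Lemma \ref{lem:dprime_inv} needs.

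The main obstacle is smoothness of $v$, that is, showing $\{\widehat v(\xi)\}$ is rapidly decreasing. Each coefficient of $\widehat v(\xi)$ is bounded by $C|\widehat u(\xi)|/|\widehat{L_\sigma}(\xi)|$, so it suffices to prove a lower bound of the form
\[
  \max_{1\le j\le n} |\widehat{L_j}(\xi)| \geq c\,|\xi|, \qquad \xi \in \Z^N\setminus\{0\},
\]
for some $c>0$. Given this bound, rapid decay of $\widehat u$ transfers directly to $\widehat v$. The smoothness criterion for rapidly decreasing sequences stated earlier then shows that $v \in \mathscr C^\infty(\T^N,\UL^{q-1})$.

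To prove the lower bound, I would use ellipticity at the level of the real symbol. The map $\xi \mapsto (\widehat{L_1}(\xi),\dots,\widehat{L_n}(\xi))$ extends to an $\R$-linear map $\R^N \to \C^n$, and I claim it is injective. Suppose $\widehat{L_j}(\xi)=0$ for all $j\le n$ with $\xi \in \R^N$. Then the real covector $\xi$ annihilates $\mathfrak e$; since it is real, it also annihilates $\overline{\mathfrak e}$, hence all of $\mathfrak e+\overline{\mathfrak e}=\mathfrak t^N_\C$, which forces $\xi=0$. An injective linear map between finite-dimensional spaces is bounded below on the unit sphere by compactness. Using homogeneity and the equivalence of the max-norm with the Euclidean norm on $\C^n$, this gives the desired $c$.

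Alternatively, if one prefers to avoid estimates altogether, one can use closed range. The finite partial sums of $u^*$ are exact by the frequency-wise construction, since each is a finite sum of smooth forms. They converge to $u^*$ in $\mathscr C^\infty$, and because $\Ran \dext'$ is closed (as recalled above from the elliptic theory), $u^*$ is exact. I would present the direct estimate as the main argument, since it also yields an explicit primitive, and mention the closed-range argument as a check.
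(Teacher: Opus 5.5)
Your argument is correct, but your main route differs from the paper's. The paper's proof is exactly the closed-range argument you mention only as a check. It uses Lemma \ref{lem:dprime_inv} merely to write each truncation $u^*_M=\sum_{0<|\xi|<M}\widehat u(\xi)e^{i\xi x}$ as $\dext' v_M$, with $v_M$ a trigonometric polynomial. It then concludes from $u^*_M\to u^*$ in $\mathscr C^\infty$ together with the closedness of $\Ran\dext'_{q-1}$, which it imports from elliptic theory on compact manifolds.

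The paper's route is shorter and needs no estimate: any $\sigma$ with $\widehat{L_\sigma}(\xi)\neq 0$ would do, so choosing $\sigma$ as a maximizer plays no role there. However, it yields no primitive and depends on the cited closed-range theorem.

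Your route replaces that theorem by the elementary bound $\max_j|\widehat{L_j}(\xi)|\ge c|\xi|$. You derive this correctly from ellipticity: a real covector annihilating $\mathfrak e$ also annihilates $\overline{\mathfrak e}$, hence all of $\mathfrak e+\overline{\mathfrak e}=\mathfrak t^N_\C$. Here the maximizing choice of $\sigma$ is essential, since an arbitrary nonvanishing $\widehat{L_\sigma}(\xi)$ could be a small divisor.

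In exchange you obtain an explicit smooth primitive with $|\widehat v(\xi)|\le C|\widehat u(\xi)|/|\xi|$. The same bound also shows directly that $\Ran\dext'_{q-1}$ is the closed subspace of $\dext'$-closed forms with vanishing zero mode. So the closed-range and finite-dimensionality facts that the paper quotes become consequences of your argument rather than inputs.
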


\begin{proof}
  For all $\xi \in \mathbb Z^N \setminus\{0\}$, by Lemma \ref{lem:dprime_inv}, there exists $\widehat v(\xi)$ such that
  \[
    \sum^n_{j=1}\widehat{L_j}(\xi)\tau_j \wedge \widehat v(\xi) =\widehat u(\xi)
  \] so that we can define
  \[
    v_M = \sum_{ \xi \in \mathbb Z^N \setminus\{0\}, |\xi| < M} \widehat v (\xi) e^{ix\xi}.
  \]
    
  Notice that $v_M$ is a smooth form, since it is a finite sum.
    
  Let
  \[
    u^*_M = \sum_{\xi \in \mathbb Z^N \setminus\{0\}, |\xi| < M} \widehat u(\xi) e^{i\xi x} 
  \]
  and notice that it satisfies $\dext'v_M = u^*_M$, and so $u^*_M$ is in the range of $\dext'_{q-1}$ for all $M$. Since $\dext'_{q-1}$ has closed range, and we have that $u^*_M \to u^*$ in the $\mathscr C^\infty$-topology, we have that $u^*$ is in the range of $\dext'_{q-1}$.
\end{proof}

\begin{prp}
  Let $\mathfrak e$ be an elliptic structure of dimension $n$ on the torus $\mathbb T^N$. Then we
  have $$\begin{cases}
    \dim H^q(\mathbb T^N,\mathfrak e) = \binom{n}{q}\ & \text{ if }\ 0 \leq q \leq n,\\
    \dim H^q(\mathbb T^N,\mathfrak e)=0 & \text{ if } \ q>n.
  \end{cases}$$
\end{prp}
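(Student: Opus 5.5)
The plan is to show that every $\dext'$-closed form is cohomologous to its zero Fourier mode, and then to check that constant forms are closed and never exact except for $0$. This gives an isomorphism between $H^q(\T^N,\mathfrak e)$ and $\Wedge^q \mathfrak e^*$, whose dimension is $\binom{n}{q}$ for $0\le q\le n$ and $0$ for $q>n$.

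\textbf{Step 1 (vanishing for $q>n$).} Since $\UL^q=\Wedge^q(\mathfrak e^*)$ and $\dim\mathfrak e=n$, we have $\UL^q=0$ for $q>n$. Hence $\mathscr C^\infty(\T^N,\UL^q)=0$ and the cohomology vanishes trivially.

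\textbf{Step 2 (reduction to constants).} Fix $0\le q\le n$ and let $u$ be a $\dext'$-closed $q$-form. Split it as
\[
  u=\widehat u(0)+u^*,
\]
where $\widehat u(0)$ is the constant (left-invariant) form built from the zero Fourier coefficients and $u^*$ collects the nonzero modes. For $q\ge1$, Lemma \ref{lem:dprime_closed_exact} shows that $u^*$ is $\dext'$-exact. So every class has a constant representative, and the map
\[
  \Wedge^q\mathfrak e^*\ni c\mapsto [c]\in H^q(\T^N,\mathfrak e)
\]
is surjective. Constant forms are automatically closed: $\dext'$ acts coefficientwise through the $L_j$, which annihilate constants because $\mathfrak e$ is abelian and so the Chevalley--Eilenberg differential vanishes. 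For $q=0$ there is no exactness to handle. A closed function satisfies $\widehat{L_\sigma}(\xi)\widehat u(\xi)=0$ for every $\xi$, and since some $\widehat{L_\sigma}(\xi)\neq0$ when $\xi\neq0$, all nonzero modes vanish. Thus $u$ is constant and $H^0\cong\C$.

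\textbf{Step 3 (injectivity).} Suppose a constant $q$-form $c$ equals $\dext' v$ for some smooth $(q-1)$-form $v$. Take Fourier coefficients at $\xi=0$. Every symbol $\widehat{L_j}(0)=0$, so $\widehat{\dext' v}(0)=\sum_j\widehat{L_j}(0)\tau_j\wedge\widehat v(0)=0$. Hence $c=\widehat c(0)=0$, and the map of Step 2 is injective. Alternatively, one can appeal to the averaging injectivity of \eqref{eq:the_homomorphism} together with the vanishing Chevalley--Eilenberg differential. Combining the steps gives $\dim H^q=\dim\Wedge^q\mathfrak e^*=\binom nq$.

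The main obstacle is already absorbed into Lemma \ref{lem:dprime_closed_exact}, namely the closed-range argument that assembles frequency-wise primitives into a smooth primitive. The only delicate point remaining is to check carefully that the Fourier coefficient of $\dext' v$ at $\xi=0$ really vanishes. This holds because each $L_j$ is a constant-coefficient vector field, so its symbol is linear in $\xi$.
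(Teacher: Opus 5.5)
Your proposal is correct and follows the paper's proof. You use Lemma~\ref{lem:dprime_closed_exact} to give every class a constant-coefficient representative, then use that $\mathfrak e$ is abelian, so the Chevalley--Eilenberg differential vanishes and $H^q\cong\Wedge^q\mathfrak e^*$; beyond this you only spell out what the paper leaves implicit, namely the cases $q=0$ and $q>n$ and injectivity via $\widehat{\dext' v}(0)=0$ (the averaging argument in Fourier form).
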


\begin{proof}
  It follows from Lemma \ref{lem:dprime_closed_exact} that all cohomology classes of $H^q(\mathbb T^N,\mathfrak e)$ have left-invariant representatives, and so the $\dext'$-cohomology can be computed using only bi-invariant forms. This means that $H^q(\mathbb T^N,\mathfrak e)$ is isomorphic to the Chevalley-Eilenberg complex of $\mathfrak e$. Since $\mathfrak e$ is abelian, we easily verify $H^q(\mathbb T^N,\mathfrak e) = \Wedge^q \mathfrak e^*$ which concludes the proof.
\end{proof}

\section*{Acknowledgements}

  The first author is grateful to Prof. George Marinescu for his hospitality during the first author's research stay at the University of Cologne, where the majority of this work was carried out. The stay was funded by the Marco Polo programme of the University of Bologna. The second author was funded by the Deutsche Forschungsgemeinschaft (DFG) under grant JA~3453/2-1.

\bibliographystyle{amsalpha}
\bibliography{references}

\end{document}